\renewenvironment{proof}[1][Proof]{\noindent\textit{#1. } }{\hfill$\square$}
\newtheoremstyle{theorem}{6pt}{6pt}{\rm}{}{\sffamily}{ }{ }{}
\theoremstyle{theorem}
\newtheorem{theorem}{\sc Theorem}[section]
\newtheorem{lemma}[theorem]{\sc Lemma}
\newtheoremstyle{example}{6pt}{6pt}{\rm}{}{\sffamily}{ }{ }{}
\theoremstyle{example}
\newtheorem{example}{\sc Example}[section]
\newtheoremstyle{corollary}{6pt}{6pt}{\rm}{}{\sffamily}{ }{ }{}
\theoremstyle{corollary}
\newtheorem{corollary}{\sc Corollary}[section]
\newtheoremstyle{definition}{6pt}{6pt}{\rm}{}{\sffamily}{ }{ }{}
\theoremstyle{definition}
\newtheorem{definition}{\sc Definition}[section]
\newtheorem{problem}[theorem]{Problem}
\newtheoremstyle{remark}{6pt}{6pt}{\rm}{}{\sffamily}{ }{ }{}
\theoremstyle{remark}
\newtheoremstyle{approximation}{6pt}{6pt}{\rm}{}{\sffamily}{ }{ }{}
\theoremstyle{approximation}
\newtheoremstyle{scheme}{6pt}{6pt}{\rm}{}{\sffamily}{ }{ }{}
\theoremstyle{scheme}
\title{On some classes of Riemannian manifolds}
\author{
 Maryam Samavaki \\
  Department of Physics and Mathematics\\
  University of Eastern  Finland\\
  P.O. Box 111, FI-80101 Joensuu,Finland  \\
  \texttt{maryam.samavaki@uef.fi} \\
  \AND
  Jukka Tuomela \\
  Department of Physics and Mathematics\\
  University of Eastern  Finland\\
  P.O. Box 111, FI-80101 Joensuu,Finland  \\
  \texttt{jukka.tuomela@uef.fi} \\
}
\begin{document}
\maketitle

\begin{abstract}
We study several classes of Riemannian manifolds which are defined by imposing a certain condition on the Ricci tensor. We consider the following cases: Ricci recurrent, Cotton, quasi Einstein and pseudo Ricci symmetric condition. Such conditions can be interpreted as overdetermined PDE systems whose unknowns are the components of the Riemannian metric, and perhaps in addition some auxiliary functions. Hence even if the dimension of the manifold is small it is not easy to compute interesting examples by hand, and indeed very few examples appear in the literature. We will present large families of nontrivial examples of such manifolds. The relevant PDE systems are first transformed to an involutive form. After that in many cases one can actually solve the resulting system explicitly. However, the involutive form itself already gives a lot of information about the possible solutions to the given problem. We will also discuss some relationships between the relevant classes. 
\end{abstract}

\keywords{ Cotton tensor, conformally conservative manifold, pseudo Ricci symmetric manifold, quasi Einstein manifold, Ricci recurrent manifold, overdetermined PDE }


\section{Introduction}
\label{intro}

In the following we are going to analyze and present examples of several classes of Riemannian manifolds: Ricci recurrent, pseudo Ricci symmetric, Cotton and quasi Einstein manifolds. Sometimes these classes are understood in a generalized sense without requiring the positive definiteness of the metric. In the following we will on the other hand only consider the positive definite case. However, in the actual construction of examples the positive definiteness does not play a role, so that using the same approach one can also construct examples which are not positive definite.

Ricci recurrent manifolds were first considered in \cite{patterson} (even earlier recurrent Riemannian manifolds were introduced in \cite{ruse}). Since then this class has been analyzed by many authors, see for example \cite{dede} and referneces therein, where also various generalizations and extensions of this notion are considered.

The concept of  pseudo Ricci symmetric manifold was perhaps explicitly first  introduced in \cite{Chaki1}. However, earlier in \cite{Chaki3} authors had obtained the characterizing condition when analyzing the existence of another structure on Riemannian manifolds. 

As a term the Cotton manifold or Cotton metric is not very common; it is used in \cite{govnur}. However, the Cotton tensor, introduced in \cite{cotton}, which gives the defining condition for such manifolds appears in wide variety of questions. We will explain below how Cotton manifolds are related to other classes of Riemannian manifolds.    

There are in fact at least 3 different definitions for quasi Einstein manifolds. The one we are interested in was apparently introduced in \cite{adamiy}. The quasi Einstein property  was then a special case of larger class of Riemannian manifolds. Other definitions, neither directly  related to the present article nor to each other can be found for example in \cite{chaval} and \cite{cashwe}. As the name suggest these spaces are typically related to problems in general relativity and the hence the metric in that case is typically not positive definite. 

We will analyze some connections of the above classes of manifolds. However, the main part of our paper is devoted to the construction of large  families examples of these different spaces. In the papers where these types of Riemannian manifolds are considered there are very few actual examples. In some sense this is natural since producing an example implies solving a relatively big system of PDE. Below we will show how to use the theory of overdetermined PDE (also called formal theory of PDE) \cite{dudsam,pommaret,werner} to produce solutions. The conclusion is in fact that with appropriate tools it is not particularly hard to find examples.

Below we have chosen examples more or less randomly with no particular application in mind. However, the reader who wants solutions of some specific form can easily adapt our approach to other contexts. Of course this approach does not always lead to explicit solutions, but the analysis can still give important information about the nature of solutions. In fact the special form of the system that is obtained in the analysis is even suitable for numerical computations, if one wants to explore numerically different possibilities.

The paper is organized as follows. 
In section 2 we recall some notions which are needed in the analysis. In section 3 the classes of Riemannian manifolds are introduced, and the relationships between them are analyzed. Then in section 4 we formulate our computational problems precisely. Finally in section 5 we present and discuss the examples and in section 6 there are some concluding remarks.

\section{Preliminaries}

\subsection{Geometry}
Let $M$ be a smooth $n$ dimensional manifold with Riemannian metric $g$. The pointwise norm of a tensor $T$ is denoted by $|T|$. The covariant derivative is denoted by $\nabla$. We say that a tensor $T$ is \emph{parallel}, if $\nabla T=0$. The curvature tensor is denoted by $R$ and the \emph{Ricci tensor} is $\mathsf{Ri}_{jk}=R^i_{ijk}$ and the scalar curvature is $\mathsf{sc}=\mathsf{Ri}^k_k$. There are several conventions regarding the signs and indices of curvature tensors. We will follow \cite{petersen}. 

In several places we will need the \emph{Ricci identity} which for general  tensors $A$ of type $(m, n)$  has the form
\begin{equation}
   A^{j_{1}\cdots j_{m}}_{i_{1}\cdots i_{n};ij}-A^{j_{1}\cdots j_{m}}_{i_{1}\cdots i_{n};ji}=\sum^{n}_{q=1}A^{j_{1}\cdots j_{m}}_{i_{1}\cdots i_{q-1}\ell i_{q+1}\cdots i_{n}}R^{\ell}_{iji_{q}}-\sum^{m}_{p=1}A^{j_{1}\cdots j_{p-1}\ell j_{p+1}\cdots j_{m}}_{i_{1}\cdots i_{n}}R^{j_{p}}_{ij\ell }
\label{general-ricci-identity}
\end{equation}
The Bianchi identity is
\begin{equation}
  R_{hijk;\ell}+R_{hik\ell;j}+R_{hi\ell j;k}=0
\label{bianchi}    
\end{equation}
By multiplying above equation on $g^{hk}$, we have
\[
    \mathsf{Ri}_{ij;\ell}-\mathsf{Ri}_{i\ell;j}=R^{h}_{\ell ji;h}
\]
which then implies that
\begin{equation}
    \mathsf{sc}_{;k}=2\,\mathsf{div}(\mathsf{Ri})=2\,\mathsf{Ri}^j_{k;j}
    \label{bia}
\end{equation}
Let us then define some classical tensors which are needed in the sequel. 
\begin{definition} Let $M$ be a $n$ dimensional Riemannian manifold with metric $g$. Then
\begin{itemize}
    \item[(i)] Schouten tensor is
    \[
      S= \mathsf{Ri}-\frac{\mathsf{sc}}{2n-2}\,g
    \]
    \item[(ii)] Cotton tensor is
    \[
       C_{ijk}=S_{ij;k}-S_{ik;j}
    \]
    \item[(iii)] Weyl tensor is
    \[
      W_{hijk} = R_{hijk }+\frac{\mathsf{sc}}{(n-1)(n-2)}\Big(g_{hk}g_{ij}-g_{hj}g_{ik}\Big)-\frac{1}{n-2}\Big(\mathsf{Ri}_{hk}g_{ij}-\mathsf{Ri}_{hj}g_{ik} + \mathsf{Ri}_{ij}g_{hk}-\mathsf{Ri}_{ik}g_{hj}\Big)\,.
    \]
\end{itemize}
\end{definition}
In some references Schouten tensor is some constant multiple of $S$ given above. Note that $W=0$ when $n\le 3$ and $S=0$ when $n=2$. Let us also recall
\begin{theorem}
Let $M$ be a $n$ dimensional Riemannian manifold. Then
\begin{itemize}
    \item[(1)] $M$ is conformally flat if and only if $C=0$ when $n=3$ or $W=0$ when $n\ge 4$.
    \item[(2)] $\mathsf{div}(W)=\frac{n-3}{n-2}\,C$ when $n\ge 4$.
\end{itemize}
\label{coflat}
\end{theorem}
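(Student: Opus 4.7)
I would prove (2) first, since it is the calculational building block. Apply $\nabla^k$ to the explicit formula for $W_{hijk}$ and simplify. The Riemann contribution $\nabla^k R_{hijk}$ reduces, via the contracted second Bianchi identity already derived in the text, $\mathsf{Ri}_{ij;\ell}-\mathsf{Ri}_{i\ell;j}=R^h_{\ell j i;h}$, to a difference of Ricci derivatives. The four Ricci-correction terms contribute further Ricci-derivative differences together with traces that, via \eqref{bia}, combine with the scalar-curvature term. Tracking the coefficients in $n$ and using the antisymmetry of $W$ in $(j,k)$, the combination collapses to $\tfrac{n-3}{n-2}(S_{hi;j}-S_{hj;i})=\tfrac{n-3}{n-2}C_{hij}$. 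The only delicate matter is index bookkeeping; no deeper input than Bianchi and the definition of $S$ is required.

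For part (1), I would first establish conformal invariance. Viewed as a $(1,3)$ tensor $W^h{}_{ijk}$, a direct computation from the transformation laws of the Christoffel symbols shows that under $\tilde g=e^{2\varphi}g$ one has $\tilde W{}^h{}_{ijk}=W^h{}_{ijk}$. The Schouten tensor transforms as $\tilde S_{ij}=S_{ij}-\varphi_{;ij}+\varphi_{;i}\varphi_{;j}-\tfrac12|\nabla\varphi|^2 g_{ij}$, and from this one checks that $C_{ijk}$ is conformally invariant precisely in dimension $3$. These invariance statements yield the ``only if'' direction: a conformally flat metric has $W=0$ when $n\ge 4$, and $C=0$ when $n=3$ (where $W$ vanishes identically).

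For the ``if'' direction the task is to construct, locally, a conformal factor $\varphi$ so that $\tilde g=e^{2\varphi}g$ is flat. Since $\tilde W=0$ is automatic from conformal invariance, flatness reduces to $\tilde S=0$, i.e.\ to the overdetermined PDE
\[
\varphi_{;ij}-\varphi_{;i}\varphi_{;j}+\tfrac12|\nabla\varphi|^2 g_{ij}=S_{ij}.
\]
Applying the Ricci identity \eqref{general-ricci-identity} to commute mixed covariant derivatives on the left and substituting the PDE itself back yields an integrability condition that collapses to $C_{ijk}=0$. In dimension $3$ this is the hypothesis; in dimension $n\ge 4$ part (2) supplies $C=0$ from $W=0$. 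With $C=0$ in hand, the prolonged Frobenius-type system can be solved locally for $\varphi$ by standard arguments (cf.\ \cite{petersen}). This PDE-integrability step is the main obstacle of the whole argument; the remainder is essentially algebraic manipulation of curvature identities.
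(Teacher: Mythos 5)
The paper itself offers no proof of this theorem: it is introduced with ``Let us also recall'' and used as classical background (part (1) is the Weyl--Schouten theorem, part (2) the standard divergence identity for the Weyl tensor), so there is no argument in the text to compare yours against. Your sketch is the standard textbook proof and is essentially sound, including the key observation that part (2) converts $W=0$ into $C=0$ when $n\ge 4$, which is exactly what makes the higher-dimensional case of part (1) close up. Two points deserve care. First, with the paper's normalization $S=\mathsf{Ri}-\frac{\mathsf{sc}}{2n-2}\,g$ the Schouten tensor is $(n-2)$ times the usual one, so the conformal transformation law you quote should carry that factor, $\tilde S_{ij}=S_{ij}-(n-2)\bigl(\varphi_{;ij}-\varphi_{;i}\varphi_{;j}+\tfrac12|\nabla\varphi|^2 g_{ij}\bigr)$; the missing $(n-2)$ propagates into your PDE for $\varphi$, though it does not change the structure of the argument. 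Second, when you commute derivatives in the prolonged system for $\varphi$, the integrability condition is not bare $C_{ijk}=0$ but rather $C_{ijk}=W^{\ell}{}_{ijk}\,\varphi_{;\ell}$ up to normalization: a Weyl term survives the substitution, and it disappears only because $W\equiv0$ identically when $n=3$ and $W=0$ by hypothesis when $n\ge4$. You invoke exactly these hypotheses one sentence later, so the logic is correct, but the condition as you state it is not what the Ricci identity actually produces. With those two repairs the outline is a complete and standard proof of the recalled result.
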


\subsection{Determinantal varieties}

Let $\mathbb{R}^{m\times n}$ be the vector space of real  $m\times n$ matrices and let $V_r$ be the subvariety of matrices of rank at most $r$. This is a determinantal variety, defined by setting to zero all minors of size $(r+1)\times (r+1)$. There are thus $\binom{m}{r+1}\binom{n}{r+1}$  polynomials which generate the ideal defining $V_r$. However, not all generators are algebraically independent and one can show that
\[
  \mathsf{codim}(V_r)=(m-r)(n-r)
\]
Let then $\mathcal{S}_n$ be the vector space of real  $n\times n$ symmetric matrices and let  $V_{r}^s$ be the subvariety of symmetric matrices of rank at most $r$. The relevant ideal is now generated by $\binom{n}{r+1}^2$  polynomials, but due to symmetry we have much less independent generators, and one can show that in this case
\begin{equation}
    \mathsf{codim}(V_r^s)=\binom{n-r+1}{2}
    \label{codim}
\end{equation}

\subsection{Overdetermined PDE}
The existence of various manifolds  considered below depends on the solvability of certain systems of overdetermined PDE. For a general overview of overdetermined PDE we refer to  \cite{dudsam,pommaret,werner} and references therein. Both books contain also historical comments on the development of the subject which started at the end of 19th century. 

Very early it was realized that before one could actually prove any existence results for overdetermined PDE in some definite function space one should first analyze the structural properties of the system. The main difficulty of the analysis of overdetermined systems is related to integrability conditions: in other words by differentiating the equations one may find new equations which are algebraically independent of the original equations. The process of finding the integrability conditions is called \emph{completion}, and the goal was to find all integrability conditions. 

Analysis of the completion process lead to two complementary approaches: geometric and algebraic. The geometric approach is based on interpreting PDE as submanifolds of jet spaces. The algebraic approach requires that the nonlinearities are polynomial and hence the equations can be interpreted as differential polynomials and the systems themselves are differential ideals generated by the given differential polynomials. 

It turns out that proving that the system is complete, or that the system can actually be completed, is quite tricky and we simply refer again to \cite{dudsam,pommaret,werner} for details. In spite of this heavy machinery which is required for the theory the end result is perhaps surprisingly constructive: there are actual algorithms for computing the completed system, i.e. the system which contains all integrability conditions. The completed system is called the \emph{involutive system}, and the completion algorithm is usually known as Cartan-Kuranishi algorithm.  

The analysis of structural properties of overdetermined PDE is also called formal theory of PDE. The word formal appears because one can say that the involutive form of the system has solutions as formal power series. One can say that in the involutive system all relevant information about the system is explicit while in the initial system it was only implicit.

An analogous situation arises in polynomial algebra. A polynomial system generates an ideal which in turn defines the corresponding variety. Now computing  the Gr\"obner basis of the ideal gives a lot of information about the variety \cite{colios}. Intuitively one may think about computing the involutive form of a system of PDE like computing the Gr\"obner basis of an ideal. 

The idea of Gr\"obner bases can be generalized to differential equations, where equations are interpreted as differential polynomials \cite{liz}.  However, not all properties of Gr\"obner bases of the algebraic case carry over to the differential case. Anyway the ideas related to Gr\"obner bases and ideal theory in general  are present in the actual implementations of completion algorithms.

One final comparison to polynomial case is perhaps helpful. In the polynomial case any variety can be decomposed to a finite number of irreducible varieties which means that any polynomial ideal  is an intersection of finite number of primary ideals. This property is still valid in the differential context in the following form: any radical differential ideal is a finite intersection of prime differential ideals. Hence if the nonlinearities are polynomial, and they will be in all systems considered below, one may also try to find the decomposition of the  involutive form. Evidently finding this decomposition greatly facilitates any further analysis of the system. 

In what follows we will use the algorithm \textsf{rifsimp} which is described in detail in \cite{rifsimp}, see also \cite{greg}. The acronym \textsf{rif}  means \emph{reduced involutive form}.
This algorithm assumes that the nonlinearities are polynomial, and that the implied differential field is the field of rational functions. It 
can also handle inequations and it can compute the decomposition of the system.  

The algorithm is implemented as the command \textsf{rifsimp}  in \textsf{Maple}.\footnote{\textsf{https://www.maplesoft.com/}}  In setting up the systems of equations the \textsf{Differential Geometry} package of \textsf{Maple} was also very useful.

Finally we note that the word ''overdetermined'' is a bit misleading. This term is traditionally used, but actually one simply means the analysis of general PDE systems. The important concept is the involutivity (or some other canonical form), and in many (or even most)  cases it is not necessary to define precisely what is meant by the term overdetermined (or underdetermined). In particular below this definition is not needed. Also one can find (at least) two different definitions in the literature which are both reasonable in their ways; see \cite{dudsam} and \cite{werner} for these different definitions.

\section{Some properties and relationships between various classes}

In the following we will consider several classes of Riemannian manifolds. These classes are defined by requiring that the corresponding Ricci tensor satisfies some condition $\mathsf{P}$. In this case we can also say that the manifold or the Riemannian metric is of the type $\mathsf{P}$. Of course we will always assume that  $\mathsf{Ri}\ne0$. 

\begin{definition}
Ricci tensor is 
\begin{itemize}
    \item \textit{Ricci recurrent}, $\mathsf{RR}$, if there is a nonzero one form $\beta$ such that
\begin{equation}
  \mathsf{Ri}_{ij;\ell}=\beta_{\ell} \mathsf{Ri}_{ij}\ .
\label{prs-rirecon-1}
\end{equation}
\item \textit{pseudo Ricci symmetric}, $\mathsf{PRS}$, if there is a nonzero one form $\alfa$ such that
\begin{equation}
   \mathsf{Ri}_{ij;\ell}=2\alfa_{\ell}\mathsf{Ri}_{ij}+\alfa_{i}\mathsf{Ri}_{\ell j}+\alfa_{j}\mathsf{Ri}_{i\ell }
\label{prs-1}
\end{equation}
\item \textit{quasi Einstein}, $\mathsf{QE}$, if there are functions $a$ and $b$, and one form $\omega$ such that
\begin{equation}
   \mathsf{Ri}=a\,g+b\,\frac{\omega\otimes\omega}{|\omega|^2}  
\label{qe-def}
\end{equation}
If $b=0$ the Ricci tensor is Einstein.
\item \textit{Cotton}, $\mathsf{CO}$, if the Cotton tensor is zero.
\end{itemize}
\label{perus-def}
\end{definition}

In dimension two we have $\mathsf{Ri}=\kappa\, g$ where $\kappa$ is the Gaussian curvature. Hence any manifold is Einstein, Cotton and Ricci recurrent. On the other hand no manifold is pseudo Ricci symmetric. Hence from now on we suppose that the dimension $n\ge 3$.

It is clear that if $\mathsf{Ri}$ is parallel the \textsf{RR} condition cannot be satisified, and on the other hand the \textsf{CO} condition is always satisfied. Below we will see that \textsf{PRS} condition is incompatible with parallelism.  

If the metric satisfies the condition $\mathsf{div}(W)=0$ it is sometimes said to be \emph{conformally conservative}. Hence by Theorem \ref{coflat} Cotton manifolds are conformally conservative when $n\ge 4$ and conformally flat when $n=3$.
Finally recall that if $M$ is an Einstein manifold then $a=\mathsf{sc}/n=$constant. 

\subsection{Ricci recurrent}
Let us then start with the \textsf{RR} case. Multiplying \eqref{prs-rirecon-1} by $\mathsf{Ri}^{ij}$ we obtain
\[
 \beta= \frac{\mathsf{Ri}^{ij}\mathsf{Ri}_{ij;k}}{|\mathsf{Ri}|^2} =\tfrac{1}{2}\,\nabla\ln(|\mathsf{Ri}|^2)
\]
From this we get the following characterization.
\begin{lemma}
 Let $\mathsf{NRi}=\frac{\mathsf{Ri}}{|\mathsf{Ri}|}$. Then $\mathsf{Ri}$ is $\mathsf{RR}$ if and only if $\mathsf{NRi}$ is parallel.
\end{lemma}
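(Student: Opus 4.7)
The strategy is a direct computation in both directions, using the formula $\beta = \tfrac{1}{2}\nabla\ln(|\mathsf{Ri}|^2)$ that was just derived above the lemma. Throughout I would work at points where $|\mathsf{Ri}|\neq 0$, which is guaranteed by our standing assumption $\mathsf{Ri}\neq 0$, so that $\mathsf{NRi}$ is smooth. The key identity to have on hand is
\[
  |\mathsf{Ri}|_{;\ell}=\frac{(|\mathsf{Ri}|^2)_{;\ell}}{2|\mathsf{Ri}|}=\frac{\mathsf{Ri}^{ij}\mathsf{Ri}_{ij;\ell}}{|\mathsf{Ri}|},
\]
obtained by differentiating $|\mathsf{Ri}|^2=\mathsf{Ri}^{ij}\mathsf{Ri}_{ij}$ (the covariant derivative of $g$ and of $g^{-1}$ being zero).

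For the forward direction, assume $\mathsf{Ri}_{ij;\ell}=\beta_\ell \mathsf{Ri}_{ij}$. Contracting with $\mathsf{Ri}^{ij}$ (as the text already does) yields $|\mathsf{Ri}|_{;\ell}=\beta_\ell|\mathsf{Ri}|$. Then the quotient rule gives
\[
  \nabla_\ell\mathsf{NRi}_{ij}=\frac{\mathsf{Ri}_{ij;\ell}}{|\mathsf{Ri}|}-\frac{\mathsf{Ri}_{ij}\,|\mathsf{Ri}|_{;\ell}}{|\mathsf{Ri}|^2}=\frac{\beta_\ell\mathsf{Ri}_{ij}}{|\mathsf{Ri}|}-\frac{\beta_\ell\mathsf{Ri}_{ij}}{|\mathsf{Ri}|}=0,
\]
so $\mathsf{NRi}$ is parallel.

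For the converse, assume $\nabla \mathsf{NRi}=0$. Writing $\mathsf{Ri}=|\mathsf{Ri}|\,\mathsf{NRi}$ and applying the Leibniz rule gives
\[
  \mathsf{Ri}_{ij;\ell}=|\mathsf{Ri}|_{;\ell}\,\mathsf{NRi}_{ij}=\frac{|\mathsf{Ri}|_{;\ell}}{|\mathsf{Ri}|}\mathsf{Ri}_{ij},
\]
so setting $\beta_\ell=|\mathsf{Ri}|_{;\ell}/|\mathsf{Ri}|=\tfrac{1}{2}\partial_\ell\ln(|\mathsf{Ri}|^2)$ reproduces the $\mathsf{RR}$ equation \eqref{prs-rirecon-1}.

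The only subtle point, and the one that requires a comment, is the clause in the $\mathsf{RR}$ definition that $\beta$ be nonzero. In the converse direction, $\beta\equiv 0$ would force $|\mathsf{Ri}|$ to be locally constant and then $\mathsf{Ri}$ itself to be parallel, which is the case explicitly excluded in the remarks immediately after Definition \ref{perus-def}. So under the standing nondegeneracy hypothesis the equivalence is clean, and no deeper ingredient (Ricci identity, Bianchi, etc.) enters — the proof is essentially the chain/quotient rule applied to $\mathsf{Ri}/|\mathsf{Ri}|$.
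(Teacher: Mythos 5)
Your proof is correct and follows essentially the same route as the paper, which simply notes that taking the covariant derivative of $\mathsf{NRi}$ (i.e.\ the quotient rule, with $\beta=\tfrac{1}{2}\nabla\ln(|\mathsf{Ri}|^2)$ as derived just before the lemma) vanishes precisely when the $\mathsf{RR}$ equation holds. Your extra remark on the nonvanishing of $\beta$ is a reasonable clarification but does not change the argument.
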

\begin{proof}
Simply taking the covariant derivative of $\mathsf{NRi}$ we see that it is zero precisely when $\mathsf{Ri}$ is $\mathsf{RR}$ with $\beta$ as given above.
\end{proof}

However, it turns out that the \textsf{RR} case can be characterized purely in an algebraic way. In \cite{roter0} it is shown that actually
\begin{equation}
      \mathsf{Ri}^k_i\mathsf{Ri}^i_\ell=\tfrac{1}{2}\,\mathsf{sc}\,\mathsf{Ri}^k_\ell
     \label{ricci-alg}
\end{equation}
Note that this result crucially depends on the fact that $g$ is positive definite. 
 But this leads easily to the following characterization of the Ricci tensor.
\begin{theorem}
Suppose that $\mathsf{Ri}$ is recurrent. Then it has a double eigenvalue $\tfrac{\mathsf{sc}}{2}$ and eigenvalue zero of multiplicity $n-2$. Moreover
\[
\beta=\nabla \ln(\mathsf{sc})\quad, \quad
\mathsf{sc}^2=2\,|\mathsf{Ri}|^2\quad\mathrm{and}\quad
\mathsf{Ri}\,\beta=\tfrac{\mathsf{sc}}{2}\, \beta\ .
\]
\label{recu-char}
\end{theorem}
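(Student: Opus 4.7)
The plan is to derive all four assertions from the algebraic identity \eqref{ricci-alg}, together with two contractions of \eqref{prs-rirecon-1} and the twice-contracted Bianchi identity \eqref{bia}; no further differential work is required once \eqref{ricci-alg} is granted.

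First I would analyse the spectrum of $\mathsf{Ri}$. Raising one index with $g$ turns it into an endomorphism which is self-adjoint, because the metric is positive definite, and hence diagonalisable over $\mathbb{R}$. Identity \eqref{ricci-alg} then reads $\mathsf{Ri}^{2} = \tfrac{\mathsf{sc}}{2}\mathsf{Ri}$ as endomorphisms, so every eigenvalue $\lambda$ satisfies $\lambda(\lambda - \tfrac{\mathsf{sc}}{2}) = 0$ and lies in $\{0,\mathsf{sc}/2\}$. If $\mathsf{sc}$ vanished at some point then $\mathsf{Ri}^{2}=0$ there and self-adjointness would force $\mathsf{Ri}=0$, contradicting the standing hypothesis $\mathsf{Ri}\neq 0$; hence $\mathsf{sc}\neq 0$. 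Letting $k$ denote the multiplicity of the eigenvalue $\mathsf{sc}/2$, the trace identity $\mathsf{sc}=k\cdot\mathsf{sc}/2$ gives $k=2$, so zero has multiplicity $n-2$. Taking the trace of $\mathsf{Ri}^{2}$ produces $|\mathsf{Ri}|^{2} = 2(\mathsf{sc}/2)^{2} = \mathsf{sc}^{2}/2$, i.e. $\mathsf{sc}^{2} = 2|\mathsf{Ri}|^{2}$.

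For the two identities involving $\beta$ I would contract \eqref{prs-rirecon-1} in two different ways. Tracing with $g^{ij}$ yields $\mathsf{sc}_{;\ell} = \mathsf{sc}\,\beta_{\ell}$, and since $\mathsf{sc}\neq 0$ this rewrites as $\beta = \nabla\ln\mathsf{sc}$. On the other hand, raising one index and contracting it with the derivative index gives $\mathsf{Ri}^{j}_{k;j} = \beta^{j}\mathsf{Ri}_{jk} = (\mathsf{Ri}\beta)_{k}$, and substituting into \eqref{bia} yields $\mathsf{sc}_{;k} = 2(\mathsf{Ri}\beta)_{k}$. Comparing with $\mathsf{sc}_{;k} = \mathsf{sc}\,\beta_{k}$ produces $\mathsf{Ri}\beta = \tfrac{\mathsf{sc}}{2}\beta$, which is also consistent with the spectral picture, since $\beta$ is then an eigenvector of $\mathsf{Ri}$ for the eigenvalue $\mathsf{sc}/2$. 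The only nontrivial ingredient in this whole argument is \eqref{ricci-alg} itself, which is quoted from \cite{roter0} and hinges on positive definiteness; once that is accepted, no serious obstacle remains and the theorem reduces to linear algebra plus two routine contractions.
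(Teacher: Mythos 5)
Your proof is correct and follows essentially the same route as the paper's: the spectral analysis of $\mathsf{Ri}^2=\tfrac{\mathsf{sc}}{2}\mathsf{Ri}$ for the eigenvalue structure and the norm identity, the $g^{ij}$-contraction of \eqref{prs-rirecon-1} for $\beta$, and the contracted Bianchi identity \eqref{bia} for $\mathsf{Ri}\,\beta=\tfrac{\mathsf{sc}}{2}\beta$. You merely spell out two points the paper leaves implicit, namely why $\mathsf{sc}\ne0$ and why the trace forces the multiplicity of $\mathsf{sc}/2$ to be exactly two.
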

\begin{proof}
Let $\lambda_j$ be the eigenvalues of $\mathsf{Ri}$. Then the formula \eqref{ricci-alg} implies that
\[
    \lambda_k^2=\tfrac{1}{2}\,\big(\lambda_1+\dots+\lambda_n\big)\lambda_k\ .
\]
This gives the first statement. Since the scalar curvature cannot be zero the formula for $\beta$ is obtained multiplying \eqref{prs-rirecon-1} by $g^{ij}$. Taking the trace in  \eqref{ricci-alg}  gives the second formula. Then by formula \eqref{bia} we have
\[
  \mathsf{sc}\,\beta_\ell= sc_{;\ell}=2\,g^{hk}\mathsf{Ri}_{h\ell;k}=
   2\,g^{hk}\beta_k \mathsf{Ri}_{h\ell}=2\mathsf{Ri}\,\beta
\]
\end{proof}

From this we immediately get
\begin{corollary} Ricci tensor cannot be at the same time \textsf{RR} and \textsf{CO}.
\end{corollary}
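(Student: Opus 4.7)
The plan is to combine the two defining equations and contradict the spectral information on $\mathsf{Ri}$ supplied by Theorem \ref{recu-char}. I would write the Cotton condition $S_{ij;k}=S_{ik;j}$ as
\[
  \mathsf{Ri}_{ij;k}-\mathsf{Ri}_{ik;j}=\frac{1}{2n-2}\bigl(\mathsf{sc}_{;k}\,g_{ij}-\mathsf{sc}_{;j}\,g_{ik}\bigr),
\]
and then substitute the \textsf{RR} equation $\mathsf{Ri}_{ij;\ell}=\beta_\ell\mathsf{Ri}_{ij}$ on the left together with the identity $\mathsf{sc}_{;\ell}=\mathsf{sc}\,\beta_\ell$ from Theorem \ref{recu-char} on the right. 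A simple rearrangement turns this into the purely algebraic identity
\[
  \beta_k\,S_{ij}=\beta_j\,S_{ik}
\]
for the Schouten tensor.

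On the open set where $\beta\ne 0$ I would next argue that this forces $S=\mu\,\beta\otimes\beta$ for some scalar function $\mu$: fixing an index $k$ with $\beta_k\ne 0$ shows every row of $S$ is a multiple of $\beta$, and the symmetry $S_{ij}=S_{ji}$ then pins that common direction to $\beta$ itself. Using $S=\mathsf{Ri}-\frac{\mathsf{sc}}{2n-2}\,g$, this rewrites as
\[
  \mathsf{Ri}=\frac{\mathsf{sc}}{2n-2}\,g+\mu\,\beta\otimes\beta,
\]
so $\mathsf{Ri}$ admits the eigenvalue $\mathsf{sc}/(2n-2)$ with multiplicity at least $n-1$ on the hyperplane $\beta^\perp$.

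Finally I would compare with Theorem \ref{recu-char}, which states that $\mathsf{Ri}$ has the double eigenvalue $\mathsf{sc}/2$ and the eigenvalue $0$ of multiplicity $n-2$. Since $n\ge 3$ and hence $n-1\ge 2$, these two multisets can be reconciled only when $\mathsf{sc}/(2n-2)=0$, i.e.\ $\mathsf{sc}=0$; but then the same theorem gives $|\mathsf{Ri}|^2=\mathsf{sc}^2/2=0$, contradicting the standing assumption $\mathsf{Ri}\ne 0$. The only minor obstacle I foresee is the bookkeeping step of working on the open set where $\beta$ does not vanish; this is harmless because $\beta$ is required to be nonzero in the definition of \textsf{RR}, so by continuity such an open set exists and the contradiction propagates to all of $M$. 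Everything else reduces to linear algebra of a symmetric tensor.
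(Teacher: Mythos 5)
Your argument is correct, and it starts from the same algebraic identity as the paper's proof but finishes along a genuinely different line. Both proofs first observe that under \textsf{RR} the Schouten tensor is itself recurrent, so that the Cotton tensor collapses to the purely algebraic expression $C_{ijk}=\beta_k S_{ij}-\beta_j S_{ik}$ (your displayed identity is exactly this, rearranged). From there the paper does not assume $C=0$ at all: it takes the eigenvector $v$ with $\mathsf{Ri}\,v=0$ supplied by Theorem \ref{recu-char}, notes that $v$ is automatically orthogonal to $\beta=\nabla\ln(\mathsf{sc})$ because $\beta$ is an eigenvector for the nonzero eigenvalue $\mathsf{sc}/2$ of the symmetric tensor $\mathsf{Ri}$, and directly computes $C_{ijk}v^j=-\tfrac{1}{2(n-1)}\mathsf{sc}_{;k}v_i\ne 0$ — a one-line exhibition of a nonvanishing component. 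You instead assume $C=0$, deduce that $S$ has rank at most one ($S=\mu\,\beta\otimes\beta$), and then contradict the eigenvalue multiplicities $\{\mathsf{sc}/2\ (\times 2),\,0\ (\times(n-2))\}$ of Theorem \ref{recu-char}; your multiplicity count is right (for $n\ge 4$ no eigenvalue has multiplicity $n-1$ unless $\mathsf{sc}=0$, and for $n=3$ one is forced to $\mathsf{sc}/(2n-2)=\mathsf{sc}/2$, again giving $\mathsf{sc}=0$ and hence $\mathsf{Ri}=0$ via $\mathsf{sc}^2=2|\mathsf{Ri}|^2$). The paper's route is shorter and constructive in that it names the nonzero component of $C$; yours buys a structural by-product — that $C=0$ together with recurrence of $S$ forces $S$ to be rank one — which is essentially the same rank-one mechanism the paper exploits later in Lemma \ref{prs-cc-qe}(ii) for the \textsf{PRS} case, so your version makes the parallel between the two corollaries more visible. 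Your closing remark about working on the open set where $\beta\ne 0$ is a reasonable precaution, though the paper's own proof is no more careful on that point.
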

\begin{proof} 
If the Ricci tensor satisfies the condition \textsf{RR} then 
\[
   S_{ij;k}=\frac{\mathsf{sc}_{;k}}{\mathsf{sc}}\Big( \mathsf{Ri}-\frac{1}{2(n-1)}\mathsf{sc}\,g_{ij}\Big)
   =\frac{\mathsf{sc}_{;k}}{\mathsf{sc}}\,S_{ij}
\]
Note that Schouten tensor is also ''recurrent'' in this case. 
Now by previous Theorem we know that $\mathsf{Ri}$ has an eigenvector $v$ such that $\mathsf{Ri}\,v=0$ and $g(\mathsf{grad}(\mathsf{sc}),v)=0$. This implies that
\[
   C_{ijk}v^j=S_{ij;k}v^j-S_{ik;j}v^j=
   -\tfrac{1}{2(n-1)}\mathsf{sc}_{;k}\,v_i\ne 0\ .
\]
\end{proof}

\subsection{Pseudo Ricci symmetric}
It turns out that the associated one form is almost the same in the $\mathsf{PRS}$ case.

\begin{lemma}
Let the Ricci tensor be $\mathsf{PRS}$ with associated one form $\alpha$. Then 
 $\mathsf{Ri}\,\alpha=0$ and 
 \[
  \alpha=\tfrac{1}{4}\,\nabla\ln(|\mathsf{Ri}|^2)
\]
If the scalar curvature is not zero then
\[
\alpha=\tfrac{1}{2}\,\nabla \ln(\mathsf{sc})\quad\mathrm{and}\quad
\mathsf{sc}^2=c\,|\mathsf{Ri}|^2
\]
for some constant $c$. 
\label{prs-closed}
\end{lemma}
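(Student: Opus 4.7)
My plan is to exploit the linearity of the \textsf{PRS} identity \eqref{prs-1} in $\alpha$ and $\mathsf{Ri}$ by contracting it with three natural tensors: $g^{ij}$, $g^{i\ell}$, and $\mathsf{Ri}^{ij}$. Each contraction turns \eqref{prs-1} into a scalar relation involving $\nabla\mathsf{sc}$ or $\nabla|\mathsf{Ri}|^2$, the one-form $\alpha$, and the vector $\mathsf{Ri}\,\alpha$; three such relations should be enough both to force $\mathsf{Ri}\,\alpha=0$ and to read off $\alpha$ in two different ways.

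First I would contract \eqref{prs-1} with $g^{ij}$, obtaining $\mathsf{sc}_{;\ell}=2\alpha_\ell\mathsf{sc}+2(\mathsf{Ri}\,\alpha)_\ell$, where $(\mathsf{Ri}\,\alpha)_\ell:=\mathsf{Ri}_{\ell i}\alpha^i$. Next, contracting \eqref{prs-1} with $g^{i\ell}$ produces $\mathsf{Ri}^{i}_{\;j;i}=3(\mathsf{Ri}\,\alpha)_j+\alpha_j\mathsf{sc}$, and plugging in the contracted second Bianchi identity \eqref{bia} to rewrite the left-hand side as $\tfrac{1}{2}\mathsf{sc}_{;j}$ yields a second linear relation in the same unknowns. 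Subtracting these two relations eliminates $\nabla\mathsf{sc}$ and the $\alpha\,\mathsf{sc}$ term, leaving $(\mathsf{Ri}\,\alpha)_j=0$, i.e.\ $\mathsf{Ri}\,\alpha=0$. Feeding this back into the first relation collapses it to $\mathsf{sc}_{;\ell}=2\alpha_\ell\mathsf{sc}$, from which $\alpha=\tfrac{1}{2}\nabla\ln(\mathsf{sc})$ whenever $\mathsf{sc}\neq 0$.

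For the $|\mathsf{Ri}|^2$ formula I would contract \eqref{prs-1} with $\mathsf{Ri}^{ij}$. Because $\nabla g=0$, the left-hand side equals $\tfrac{1}{2}\nabla_\ell|\mathsf{Ri}|^2$. On the right, the $2\alpha_\ell\mathsf{Ri}_{ij}$ term contributes $2\alpha_\ell|\mathsf{Ri}|^2$, while each of the remaining two terms has the form $\alpha^a(\mathsf{Ri}^2)_{a\ell}$, which vanishes because $\mathsf{Ri}\,\alpha=0$ already implies $\mathsf{Ri}^2\alpha=0$. Hence $\tfrac{1}{2}\nabla|\mathsf{Ri}|^2=2\alpha\,|\mathsf{Ri}|^2$, and since the standing assumption $\mathsf{Ri}\neq 0$ is in force this rearranges to $\alpha=\tfrac{1}{4}\nabla\ln(|\mathsf{Ri}|^2)$. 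Equating the two expressions for $\alpha$ gives $\nabla\ln(\mathsf{sc}^2/|\mathsf{Ri}|^2)=0$, so $\mathsf{sc}^2=c\,|\mathsf{Ri}|^2$ for a constant $c$.

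The one step that requires any care is the deduction of $\mathsf{Ri}\,\alpha=0$: one has to notice that the two metric contractions assign \emph{different} coefficients to $\mathsf{Ri}\,\alpha$ (the factor $2$ from the $g^{ij}$ trace versus $3$ from the $g^{i\ell}$ trace), so combining them with \eqref{bia} is non-degenerate and actually forces this vector to vanish. Once that algebraic input is in hand, the remaining two identities drop out by pure contraction, and the final relation $\mathsf{sc}^2=c|\mathsf{Ri}|^2$ is merely the observation that two gradients of logarithms agree.
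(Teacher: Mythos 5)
Your proposal is correct and follows essentially the same route as the paper: contracting \eqref{prs-1} with $g^{ij}$ and $g^{i\ell}$, invoking the contracted Bianchi identity \eqref{bia} to force $\mathsf{Ri}\,\alpha=0$, then contracting with $\mathsf{Ri}^{ij}$ for the $|\mathsf{Ri}|^2$ formula. The only (harmless) addition is that you spell out why the cross terms vanish in the $\mathsf{Ri}^{ij}$ contraction and how equating the two gradient expressions yields $\mathsf{sc}^2=c\,|\mathsf{Ri}|^2$, steps the paper leaves implicit.
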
 
It is easy to see that if the scalar curvature is constant then it has to be zero. However, it seems to be an open problem if the case $\mathsf{sc}=0$ can actually occur. If one does not assume the postive definiteness of the metric then it is easy to construct examples with $\mathsf{sc}=0$. However, we were unable to find an example with a positive definite metric. 

\begin{proof}
Multiplying \eqref{prs-1} with $g^{i\ell}$ and using the formula \eqref{bia} we get
\[
 \mathsf{sc}_{;j}=2 g^{i\ell}\mathsf{Ri}_{ij;\ell}=4\alpha _{\ell}g^{i\ell}\mathsf{Ri}_{ij}+2g^{i\ell}\alpha_{i}\mathsf{Ri}_{\ell j}+2\alpha_{j}\mathsf{sc}=
 6\alpha^i\mathsf{Ri}_{ij}+2\alpha_{j}\mathsf{sc}
\]
On the other hand multiplying \eqref{prs-1} with $g^{ij}$  we get
\[
  \mathsf{sc}_{;\ell}=2\alpha_{\ell}\mathsf{sc}+2\alpha^i\mathsf{Ri}_{i\ell}
\]
Hence $ \mathsf{Ri}\,\alpha=0$.  The expression for $\alpha$ is then  obtained by multiplying  \eqref{prs-1} with $\mathsf{Ri}^{ij}$. If $\mathsf{sc}\ne0$ we have   $\mathsf{sc}_{;\ell}=2\alpha_{\ell}\mathsf{sc}$ which gives the other expression for $\alpha$.
\end{proof}

From this we get immediately
\begin{corollary}
\begin{itemize}
    \item[(i)] The Ricci tensor cannot be both $\mathsf{RR}$ and $\mathsf{PRS}$.
    \item[(ii)] If \textsf{Ri} is parallel then the \textsf{PRS} condition cannot be satisfied.
\end{itemize}
\label{rr-prs}
\end{corollary}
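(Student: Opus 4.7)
My plan is to exploit the conclusion $\mathsf{Ri}\,\alpha=0$ from Lemma \ref{prs-closed} in both parts; this annihilation of the $\mathsf{PRS}$ one form by the Ricci operator is the mechanism that drives the incompatibilities.

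For part (i), I would combine Theorem \ref{recu-char} with Lemma \ref{prs-closed}. The $\mathsf{RR}$ hypothesis together with $\mathsf{Ri}\ne 0$ forces $\mathsf{sc}\ne 0$, since $\mathsf{sc}^2=2|\mathsf{Ri}|^2$, and moreover $\beta=\nabla\ln(\mathsf{sc})$ with $\mathsf{Ri}\,\beta=\tfrac{\mathsf{sc}}{2}\beta$. If the Ricci tensor were simultaneously $\mathsf{PRS}$, the second part of Lemma \ref{prs-closed}, which applies precisely because $\mathsf{sc}\ne 0$, would yield $\alpha=\tfrac12\nabla\ln(\mathsf{sc})=\tfrac12\beta$. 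Substituting into $\mathsf{Ri}\,\alpha=0$ gives
\[
0=\mathsf{Ri}\,\alpha=\tfrac12\,\mathsf{Ri}\,\beta=\tfrac{\mathsf{sc}}{4}\,\beta,
\]
which forces $\beta=0$, contradicting the requirement that the recurrence one form be nonzero.

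For part (ii), assume $\nabla\mathsf{Ri}=0$ and that \eqref{prs-1} holds with a nonzero $\alpha$. The vanishing left-hand side then gives
\[
2\alpha_\ell\,\mathsf{Ri}_{ij}+\alpha_i\,\mathsf{Ri}_{\ell j}+\alpha_j\,\mathsf{Ri}_{i\ell}=0.
\]
I would contract this with $\alpha^\ell$. Using $\mathsf{Ri}\,\alpha=0$ from Lemma \ref{prs-closed}, the last two terms collapse to zero and one is left with $2|\alpha|^2\mathsf{Ri}_{ij}=0$. Positive definiteness of $g$ together with $\alpha\ne 0$ gives $|\alpha|^2>0$, so $\mathsf{Ri}=0$, contradicting the standing assumption $\mathsf{Ri}\ne 0$.

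Both arguments are short and essentially mechanical. The only subtlety worth checking is that the identity $\mathsf{Ri}\,\alpha=0$ in Lemma \ref{prs-closed} is derived without assuming $\mathsf{sc}\ne 0$, so it remains available in part (ii), where a priori the scalar curvature could vanish. No deeper obstacle arises.
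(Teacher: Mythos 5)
Your proof is correct. Part (ii) is exactly the paper's argument: contract the $\mathsf{PRS}$ identity with $\alpha^\ell$ and use $\mathsf{Ri}\,\alpha=0$ from Lemma \ref{prs-closed} to obtain $2|\alpha|^2\mathsf{Ri}=0$, which is impossible. Part (i) reaches the same contradiction by a genuinely different route. The paper equates the two recurrence formulas via $\beta=2\alpha$, cancels the common term $2\alpha_\ell\mathsf{Ri}_{ij}$ to obtain the purely algebraic relation $\alpha_i\mathsf{Ri}_{\ell j}+\alpha_j\mathsf{Ri}_{i\ell}=0$, and contracts with $\alpha^i$ to land on $|\alpha|^2\mathsf{Ri}=0$; the only inputs are the expressions of $\beta$ and $\alpha$ as multiples of $\nabla\ln(|\mathsf{Ri}|^2)$ and the annihilation $\mathsf{Ri}\,\alpha=0$. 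You instead invoke the eigenvalue statement $\mathsf{Ri}\,\beta=\tfrac{\mathsf{sc}}{2}\,\beta$ from Theorem \ref{recu-char}, which combined with $\alpha=\tfrac12\beta$ and $\mathsf{Ri}\,\alpha=0$ forces $\beta=0$. This is clean and valid, but note that it leans on the algebraic identity \eqref{ricci-alg} (quoted from the literature and dependent on positive definiteness of $g$) through Theorem \ref{recu-char}, whereas the paper's proof of (i) needs only Lemma \ref{prs-closed} and the elementary formula for $\beta$; in that sense the paper's version is the more economical one, while yours makes the geometric mechanism (the clash between the eigenvalue $\tfrac{\mathsf{sc}}{2}$ and the eigenvalue $0$ on the same one form) more transparent. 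Your closing check --- that $\mathsf{Ri}\,\alpha=0$ is derived in Lemma \ref{prs-closed} without assuming $\mathsf{sc}\ne0$, so it remains available in part (ii) --- is accurate and worth having made explicit.
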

\begin{proof}
(i) The previous Lemmas imply that  the associated one forms in the two cases satisfy $\beta=2\alpha$. Hence
\[
   \beta_{\ell} \mathsf{Ri}_{ij}=
    \mathsf{Ri}_{ij;\ell}=2\alfa_{\ell}\mathsf{Ri}_{ij}+\alfa_{i}\mathsf{Ri}_{\ell j}+\alfa_{j}\mathsf{Ri}_{i\ell }
\]
implies that $\alfa_{i}\mathsf{Ri}_{\ell j}+\alfa_{j}\mathsf{Ri}_{i\ell }=0$. Multiplying this with $\alpha^i$ gives $|\alpha|^2\mathsf{Ri}=0$ which is impossible.

(ii) If \textsf{Ri} is parallel, then multiplying \eqref{prs-1}  with $\alpha^\ell$ gives $2|\alpha|^2 \mathsf{Ri}=0$ which is impossible.
\end{proof}

\subsection{Quasi Einstein}

Let us then analyze quasi Einstein structure. 
To this end it is convenient to formulate the condition \eqref{qe-def} differently. Let us introduce the tensor $T=\mathsf{Ri}_{ij}-a\,g_{ij}$. Then we can say
\begin{itemize}
    \item[(i)] $M$ is an Einstein manifold, if $T=0$  and $a=\mathsf{sc}/n$.
    \item[(ii)] $M$ is a quasi Einstein manifold, if  the matrix rank of $T$ is one.
\end{itemize}
In this way we see that the one form $\omega$ and the function $b$ are actually quite irrelevant in the analysis of the existence of quasi Einstein structure.

Now if we want that the symmetric tensor $T$ is of matrix rank one then according to the formula \eqref{codim} there are only $\binom{n}{2}$ algebraically independent differential equations. On the other hand there are  $1+\binom{n+1}{2}$ unknowns, namely $a$ and the  components of $g$, so it should not be too difficult to find solutions. 
Note that no derivatives of $a$ appear in the equations, so it is natural to first eliminate $a$ from the equations and then consider the system obtained for the metric $g$. This is the approach we will follow below. 

Of course when we suppose that $g$ is of specific form it is not a priori clear how many independent differential equations are obtained in this way.
Note that as a PDE system $\mathsf{QE}$ system is essentially different from $\mathsf{RR}$ and $\mathsf{PRS}$ systems. $\mathsf{QE}$ is a fully nonlinear system, i.e. nonlinear in highest derivatives while $\mathsf{RR}$ and $\mathsf{PRS}$ systems are quasilinear. 

Once the appropriate $T$ is found $b$ and $\omega$ can easily be computed. By taking traces  in the formula \eqref{qe-def} one sees immediately that $b=\mathsf{sc}-na$ and then $\omega$ can be solved from the linear system
\[
    T_{ij}\omega^j=\big(\mathsf{sc}-n\,a\big)\,\omega_i
\]
Still another way to characterize the $\mathsf{QE}$ case which is actually useful when considering examples is that $\mathsf{Ri}$ has a simple eigenvalue $\mathsf{sc}-(n-1)a$ corresponding to the eigenvector $\omega$, and all vectors orthogonal to $\omega$ are eigenvectors with eigenvalue $a$ whose multiplicity is thus $n-1$.
But this formulation immediately gives
\begin{theorem}
If the Ricci tensor is recurrent and $n=3$ then it is automatically quasi Einstein. If $n>3$ the Ricci tensor cannot be both recurrent and quasi Einstein.
\label{rr-qe}
\end{theorem}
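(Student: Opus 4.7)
The plan is to reduce both hypotheses to purely algebraic statements about the spectrum of $\mathsf{Ri}$ and then compare multiplicities. Theorem \ref{recu-char} already gives the spectrum of a recurrent Ricci tensor: a double eigenvalue $\mathsf{sc}/2$ together with zero of multiplicity $n-2$. On the other hand, the spectral reformulation of the quasi Einstein condition stated immediately before the theorem says that $\mathsf{Ri}$ has a simple eigenvalue $\mathsf{sc}-(n-1)a$ along $\omega$ and the orthogonal complement of $\omega$ as an eigenspace of dimension $n-1$ with eigenvalue $a$. Once the two conditions are in this form, the rest of the argument is a matching of multiplicity patterns, with no PDE or curvature identities needed.

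For $n=3$ I would just verify that the patterns coincide: the recurrent spectrum is $\bigl(\mathsf{sc}/2,\,\mathsf{sc}/2,\,0\bigr)$, which is precisely the quasi Einstein pattern with $a=\mathsf{sc}/2$ and simple eigenvalue $\mathsf{sc}-2a=0$. Choosing $\omega$ to be any nonzero covector spanning the kernel of $\mathsf{Ri}$ and setting $b=\mathsf{sc}-3a=-\mathsf{sc}/2$ then yields the explicit decomposition $\mathsf{Ri}=a\,g+b\,\omega\otimes\omega/|\omega|^2$, establishing the first assertion.

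For $n>3$ the comparison fails almost immediately: the quasi Einstein condition forces one eigenspace of dimension $n-1\ge 3$, whereas the recurrent pattern only offers eigenspaces of dimensions $2$ and $n-2$. Requiring $n-1\in\{2,n-2\}$ forces either $n=3$ or a contradiction. The one subtle point, and the main obstacle I expect, is to rule out the degenerate possibility that the two nominally distinct eigenvalues in the quasi Einstein decomposition coincide, that is, the Einstein subcase $\mathsf{sc}=na$; here $\mathsf{Ri}=a\,g$ and all eigenvalues of $\mathsf{Ri}$ are equal. Comparing with the recurrent spectrum then forces $a=\mathsf{sc}/2=0$, hence $\mathsf{Ri}=0$, violating the standing hypothesis $\mathsf{Ri}\ne 0$. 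With this degenerate case eliminated, the multiplicity count closes the $n>3$ case.
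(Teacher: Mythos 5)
Your proposal is correct and follows essentially the same route as the paper: both arguments compare the eigenvalue multiplicities from Theorem \ref{recu-char} with the spectral characterization of the quasi Einstein condition, and in the $n=3$ case produce the same explicit decomposition $\mathsf{Ri}=\tfrac{\mathsf{sc}}{2}\bigl(g-\omega\otimes\omega/|\omega|^2\bigr)$. Your extra step ruling out the degenerate Einstein subcase is a welcome bit of care that the paper leaves implicit, but it does not change the method.
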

\begin{proof}
By Theorem \ref{recu-char} the eigenvalue structure of  $\mathsf{Ri}$ can satisfy both $\mathsf{RR}$ and $\mathsf{QE}$ conditions only if $n=3$. In this case if $\omega$ is the eigenvector corresponding to the zero eigenvalue we can write the Ricci tensor as follows
\[
 \mathsf{Ri}_{ij}= \frac{\mathsf{sc}}{2} \Big(g_{ij}-\frac{\omega_{i}\omega_{j}}{|\omega|^{2}}\Big)\ .
\]
\end{proof}

On the other hand \textsf{PRS} and \textsf{QE} conditions can be satisfied in any dimension. In these cases the form of the Ricci tensor is as follows.
\begin{theorem} 
Let us suppose that both $\mathsf{PRS}$ and $\mathsf{QE}$ conditions are satisfied and let $\alpha$ be the associated one form. If $a\ne 0$ we have
\[
\mathsf{Ri}_{ij}= \frac{\mathsf{sc}}{n-1} \Big(g_{ij}-\frac{\alpha_{i}\alpha_{j}}{|\alpha|^{2}}\Big)
\]
If $a= 0$ then
\[
  g(\omega,\alpha)=0  \quad , \quad 
  \mathsf{Ri}_{ij}= \mathsf{sc}\,\frac{\omega_{i}\omega_{j}}{|\omega|^{2}}
   \quad \mathrm{and}\quad 
   \nabla_\alpha\omega=|\alpha|^2\omega\ .
\]
\label{q-prs}
\end{theorem}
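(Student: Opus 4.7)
The plan is to combine the algebraic constraint $\mathsf{Ri}\,\alpha=0$ coming from Lemma~\ref{prs-closed} with the eigenvalue structure dictated by the $\mathsf{QE}$ form \eqref{qe-def}: $\mathsf{Ri}$ has eigenvalue $a$ of multiplicity $n-1$ on $\omega^\perp$, and eigenvalue $a+b=\mathsf{sc}-(n-1)a$ of multiplicity one with eigenvector $\omega$. Since $\alpha$ is nonzero and annihilates $\mathsf{Ri}$, it must lie in $\ker\mathsf{Ri}$, and the location of that kernel depends on whether $a$ vanishes, which splits the argument into the two cases of the theorem.

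In the case $a\neq 0$ the $(n-1)$-dimensional eigenspace $\omega^\perp$ has nonzero eigenvalue, so $\ker\mathsf{Ri}$ is forced to be the line $\mathbb{R}\omega$; this gives $a+b=0$ and $\alpha\parallel\omega$. Tracing \eqref{qe-def} then yields $\mathsf{sc}=na+b=(n-1)a$, so $a=\mathsf{sc}/(n-1)$, and the proportionality $\alpha\parallel\omega$ lets me replace $\omega\otimes\omega/|\omega|^2$ by $\alpha\otimes\alpha/|\alpha|^2$ in \eqref{qe-def}, giving the stated formula.

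In the case $a=0$ the kernel equals $\omega^\perp$, so $\alpha\in\omega^\perp$ immediately produces $g(\omega,\alpha)=0$; moreover $\mathsf{sc}=b\neq 0$ since $\mathsf{Ri}\neq 0$. The formula $\mathsf{Ri}_{ij}=\mathsf{sc}\,\omega_i\omega_j/|\omega|^2$ is then just \eqref{qe-def} with $a=0$, $b=\mathsf{sc}$. For the third identity I would contract the $\mathsf{PRS}$ equation \eqref{prs-1} with $\alpha^\ell$; the two terms $\alpha^\ell\mathsf{Ri}_{\ell j}$ and $\alpha^\ell\mathsf{Ri}_{i\ell}$ vanish by $\mathsf{Ri}\,\alpha=0$, leaving $\nabla_\alpha\mathsf{Ri}=2|\alpha|^2\mathsf{Ri}$. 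Substituting the formula for $\mathsf{Ri}$ and using $\nabla_\alpha\mathsf{sc}=2|\alpha|^2\mathsf{sc}$ from Lemma~\ref{prs-closed} to cancel the scalar factor reduces the claim to $\nabla_\alpha(\hat\omega\otimes\hat\omega)=0$ with $\hat\omega=\omega/|\omega|$; contracting this with $\hat\omega$ and using $\hat\omega\cdot\nabla_\alpha\hat\omega=\tfrac{1}{2}\nabla_\alpha|\hat\omega|^2=0$ collapses it to the scale-invariant statement $\nabla_\alpha\hat\omega=0$.

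The main obstacle is this very last step, since \eqref{qe-def} determines $\omega$ only up to a nonvanishing scalar while $\nabla_\alpha\omega=|\alpha|^2\omega$ is not invariant under such rescalings. To recover the stated equality I would fix the freedom by the normalization $|\omega|^2=\mathsf{sc}$, which is compatible with $\mathsf{Ri}=\mathsf{sc}\,\omega\otimes\omega/|\omega|^2$; then $\nabla_\alpha|\omega|^2=\nabla_\alpha\mathsf{sc}=2|\alpha|^2|\omega|^2$ by Lemma~\ref{prs-closed}, which combined with $\nabla_\alpha\hat\omega=0$ upgrades to $\nabla_\alpha\omega=|\alpha|^2\omega$. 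Everything else is spectral bookkeeping plus a single contraction of \eqref{prs-1} against $\alpha^\ell$.
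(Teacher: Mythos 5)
Your argument is correct and follows the same skeleton as the paper's: contract the quasi Einstein decomposition against $\alpha$, use $\mathsf{Ri}\,\alpha=0$ from Lemma \ref{prs-closed}, and split on whether $a$ vanishes. Your phrasing of the case split via the spectrum of $\mathsf{Ri}$ (kernel either $\mathbb{R}\omega$ or $\omega^{\perp}$) is equivalent to the paper's direct computation $a\alpha_{j}+b\,\tfrac{g(\omega,\alpha)}{|\omega|^{2}}\omega_{j}=0$, and both yield $a+b=0$, $\alpha\parallel\omega$, $a=\mathsf{sc}/(n-1)$ in the first case and $g(\omega,\alpha)=0$, $b=\mathsf{sc}$ in the second. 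Where you genuinely diverge is the third identity: you contract \eqref{prs-1} over the derivative index $\ell$ to get $\nabla_{\alpha}\mathsf{Ri}=2|\alpha|^{2}\mathsf{Ri}$ and then peel off the scalar factor, whereas the paper contracts over a tensor index with $\alpha^{i}$, which produces the scale-invariant relation $\alpha^{i}\omega_{i;k}=|\alpha|^{2}\omega_{k}$ rather than $\alpha^{k}\omega_{i;k}=|\alpha|^{2}\omega_{i}$. Your observation that $\nabla_{\alpha}\omega=|\alpha|^{2}\omega$, read as the derivative of $\omega$ along $\alpha$, is not invariant under rescaling $\omega$ (while \eqref{qe-def} is) is a genuine point that the paper glosses over: what the structure forces unconditionally is only $\nabla_{\alpha}\hat\omega=0$ together with $\nabla_{\alpha}\omega=\tfrac{\nabla_{\alpha}|\omega|^{2}}{2|\omega|^{2}}\,\omega$, and the stated coefficient $|\alpha|^{2}$ requires fixing the scale so that $|\omega|^{2}/\mathsf{sc}$ is constant along $\alpha$ (e.g.\ your normalization $|\omega|^{2}=\mathsf{sc}$). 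So your proof buys a precise statement of exactly what is and is not normalization-dependent, at the cost of one extra normalization step; the paper's contraction buys a scale-invariant identity but one that does not literally read as $\nabla_{\alpha}\omega$.
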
  
\begin{proof}
By multiplying formula (\ref{qe-def}) with $\alpha^{i}$ and applying Lemma \ref{prs-closed}, we have
\begin{equation}
  a\alpha_{j}+b\,\tfrac{g(\omega,\alpha)}{|\omega|^{2}}\omega_{j}=0
\label{q-prs-1}
\end{equation}
If $a\ne0$ and $g(\omega,\alpha)\neq 0$ then $\alpha$ and $\omega$ are linearly dependent and we may choose $\omega=\alpha$ which gives
\[
   a=\frac{\mathsf{sc}}{n-1} \quad \mathrm{and} \quad b=-a \ .
\]
When  $a=0$ the first two statements are obvious. To get the third we take the covariant derivative of the formula $ \mathsf{Ri}_{ij}= \mathsf{sc}\,\tfrac{\omega_{i}\omega_{j}}{|\omega|^{2}}$, and then multiply it with $\alpha^{i}$. Then using the formula (\ref{prs-1}) and simplifying we get the result. 
\end{proof}

\begin{lemma}
 Suppose that $\mathsf{Ri}$ satisfies the $\mathsf{PRS}$ condition and that  $\mathsf{sc}\ne0$; then
 \begin{itemize}
     \item[(i)] if $\mathsf{Ri}$ is also $\mathsf{QE}$ with $a\ne0$ then it is $\mathsf{CO}$
     \item[(ii)] if $\mathsf{Ri}$ is also $\mathsf{CO}$  then it is $\mathsf{QE}$
 \end{itemize}
 \label{prs-cc-qe}
\end{lemma}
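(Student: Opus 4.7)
The plan is to compute the Cotton tensor explicitly under the $\mathsf{PRS}$ hypothesis and then read off both implications from the resulting formula.

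First I would use the $\mathsf{PRS}$ relation \eqref{prs-1} to compute the antisymmetric part of $\nabla \mathsf{Ri}$ appearing in the Cotton tensor. Writing out $\mathsf{Ri}_{ij;k}$ and $\mathsf{Ri}_{ik;j}$ from \eqref{prs-1} and subtracting, the $\alpha_i$ terms cancel by symmetry of $\mathsf{Ri}$, leaving
\[
\mathsf{Ri}_{ij;k}-\mathsf{Ri}_{ik;j}=\alpha_k\mathsf{Ri}_{ij}-\alpha_j\mathsf{Ri}_{ik}.
\]
Combining this with $\mathsf{sc}_{;\ell}=2\alpha_\ell\,\mathsf{sc}$ (from Lemma \ref{prs-closed}, since $\mathsf{sc}\ne0$) and the definition of the Schouten tensor yields
\[
C_{ijk}=\alpha_k\mathsf{Ri}_{ij}-\alpha_j\mathsf{Ri}_{ik}-\frac{\mathsf{sc}}{n-1}\bigl(\alpha_k g_{ij}-\alpha_j g_{ik}\bigr).
\]
This single identity is the engine of the lemma and both parts follow from it with minimal additional work.

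For (i), I would substitute the form of $\mathsf{Ri}$ given by Theorem \ref{q-prs} (the $a\ne0$ case), namely $\mathsf{Ri}_{ij}=\frac{\mathsf{sc}}{n-1}\bigl(g_{ij}-\alpha_i\alpha_j/|\alpha|^2\bigr)$, into the formula for $C_{ijk}$. The terms $\alpha_k g_{ij}$ and $\alpha_j g_{ik}$ coming from the Ricci substitution match (up to the factor $\mathsf{sc}/(n-1)$) the subtracted terms, and the two terms involving $\alpha_i\alpha_j\alpha_k/|\alpha|^2$ cancel against each other because they are symmetric in $j\leftrightarrow k$ while the outer combination is antisymmetric. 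Thus $C_{ijk}=0$ and $\mathsf{Ri}$ is $\mathsf{CO}$.

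For (ii), assuming $C_{ijk}=0$, the above identity reads
\[
\alpha_k\mathsf{Ri}_{ij}-\alpha_j\mathsf{Ri}_{ik}=\frac{\mathsf{sc}}{n-1}\bigl(\alpha_k g_{ij}-\alpha_j g_{ik}\bigr).
\]
Contracting with $\alpha^k$ and using Lemma \ref{prs-closed}, which gives $\mathsf{Ri}\,\alpha=0$, the second term on each side drops out, leaving
\[
|\alpha|^2\mathsf{Ri}_{ij}=\frac{\mathsf{sc}}{n-1}\bigl(|\alpha|^2 g_{ij}-\alpha_i\alpha_j\bigr),
\]
which is exactly the $\mathsf{QE}$ form \eqref{qe-def} with $a=\mathsf{sc}/(n-1)$, $b=-a$, and $\omega=\alpha$ (one must note that $|\alpha|^2\ne 0$ since $\alpha$ is nonzero and $g$ is positive definite). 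The only mild subtlety is verifying that $\alpha$ is nonvanishing where needed; this is built into the definition of $\mathsf{PRS}$ and is where positive definiteness is implicitly used. No delicate obstruction appears: the whole argument is a short contraction calculation, with the key inputs being the precomputed derivative identities from Lemma \ref{prs-closed} and Theorem \ref{q-prs}.
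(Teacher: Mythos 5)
Your proposal is correct and follows essentially the same route as the paper: both derive the identity $C_{ijk}=\alpha_k\bigl(\mathsf{Ri}_{ij}-\tfrac{\mathsf{sc}}{n-1}g_{ij}\bigr)-\alpha_j\bigl(\mathsf{Ri}_{ik}-\tfrac{\mathsf{sc}}{n-1}g_{ik}\bigr)$ from the $\mathsf{PRS}$ relation and $\mathsf{sc}_{;k}=2\alpha_k\,\mathsf{sc}$, then prove (i) by substituting the form of $\mathsf{Ri}$ from Theorem \ref{q-prs} and (ii) by showing $C=0$ forces the rank-one condition. Your contraction with $\alpha^k$ using $\mathsf{Ri}\,\alpha=0$ merely makes explicit the step the paper dismisses as ``easy to check,'' and does so correctly.
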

\begin{proof}
If the \textsf{PRS} condition is satisfied and $\mathsf{sc}\ne 0$ then the Cotton tensor is
\[
  C_{ijk}=\frac{\mathsf{sc}_{;k}}{2\,\mathsf{sc}}\Big(\mathsf{Ri}_{ij}-\frac{\mathsf{sc}}{n-1}\,g_{ij}\Big)-
  \frac{\mathsf{sc}_{;j}}{2\,\mathsf{sc}}\Big(\mathsf{Ri}_{ik}-\frac{\mathsf{sc}}{n-1}\,g_{ik}\Big)
\]
Now simply substituting the expression for $\mathsf{Ri}$ given in Theorem \ref{q-prs} shows that $C=0$ which proves the statement (i). 

On the other hand it is easy to check that the Cotton tensor of the above form can be zero only if the matrix rank of $\mathsf{Ri}_{ij}-\frac{\mathsf{sc}}{n-1}\,g_{ij}$ is one which is precisely the \textsf{QE} condition.
\end{proof}

There is a completely different way to construct \textsf{QE} metrics which we now describe. The form of the condition makes one think about conformal equivalence. Let $g$ be a given metric and let $\hat g=\exp(2\lambda) g$ be a metric that is conformally equivalent to it. If now $\widehat{\mathsf{Ri}}$ is the Ricci tensor associated to $\hat g$ then
\[
  \widehat{\mathsf{Ri}}_{ij}=\mathsf{Ri}_{ij}-(n-2)\big(\lambda_{;ij}-\lambda_{;i}\lambda_{;j}\big)
    -\big(\Delta\lambda+(n-2)|\nabla\lambda|^2\big)g_{ij}
\]
Hence if we can find a metric $g$ and a function $\lambda$ such that $\mathsf{Ri}_{ij}=(n-2)\lambda_{;ij}$ then $\widehat{\mathsf{Ri}}$ is quasi Einstein:
\[
 \widehat{\mathsf{Ri}}_{ij}=
    -\big(\Delta\lambda+(n-2)|\nabla\lambda|^2\big)
    \exp(-2\lambda)\hat g_{ij}+(n-2)\lambda_{;i}\lambda_{;j}
\]
Note that the equations are trivially satisfied if $\mathsf{Ri}=\nabla\nabla\lambda=0$. However, if $\nabla\lambda\ne0$ there is still a nontrivial solution
\[
 \widehat{\mathsf{Ri}}_{ij}=
    -(n-2)|\nabla\lambda|^2
    \exp(-2\lambda)\hat g_{ij}+(n-2)\lambda_{;i}\lambda_{;j}
\]
The existence of solutions to PDE system  $\mathsf{Ri}=T$ where $T$ is a given symmetric tensor is analyzed in \cite{deturck}. It is instructive to consider this system from the point of view of overdetermined systems. 

At the outset there are thus $\tfrac{1}{2}\,n(n+1)$ second order quasilinear PDE for the components of the metric so that it seems the system is determined. However, we have the Bianchi identity \eqref{bia} which should be taken into account. Let us then define the Bianchi operator $B$ by the formula
\[
   B(T)=2\,\mathsf{div}(T)-\nabla \mathsf{tr}(T)=
      2g^{ij} T_{ik;j}-g^{ij}T_{ij;k}
\]
Hence if the system $\mathsf{Ri}=T$ has solutions then necessarily the condition $B(T)=0$ must also be satisfied. Note that this is a system of $n$ first order equations in metric $g$. Let us then define the modified Bianchi operator
\[
  \tilde  B(T)=
      2g^{ij} T_{ik,j}-g^{ij}T_{ij,k}
\]
Note that the standard and covariant derivatives agree up to lower order corrections. Hence the components of $\tilde B(\mathsf{Ri})$ are second order differential operators. This leads to the following system:
\[
   \begin{cases}
   \mathsf{Ri}=T\\
   \tilde B(\mathsf{Ri})= \tilde  B(T)\\
   B(T)=0
   \end{cases}
\]
Interestingly the initial system $\mathsf{Ri}=T$ is not elliptic while the above completed system is, provided that $\mathsf{Ri}$ is of full (matrix) rank. The ellipticity can then be used to show the existence of local solutions. In our case the system is 
\begin{equation}
   \begin{cases}
   \mathsf{Ri}=(n-2)\lambda_{;ij}\\
   \tilde B(\mathsf{Ri})= (n-2)\tilde  B(\lambda_{;ij})\\
   B(\lambda_{;ij})=0
   \end{cases}    
   \label{qe2-syst}
\end{equation}
However, from the point of view of existence of solutions this is a very different system since $\lambda$ is also unknown, and moreover this is a third order system.

\section{Setting up the computational problem}

We will choose some families of metrics and try to construct metrics which satisfy one or more of the conditions in Definition \ref{perus-def}. All conditions lead to systems of PDE whose nonlinearities are polynomial, and hence we can use \textsf{rifsimp} to analyse them. 

It is known that the complexity of computing Gr\"obner basis is very bad (doubly exponential) in the worst case. Of course computing the involutive form is even more difficult. However, typically the time required for these computations is far from the worst case. On the other hand since there is no reasonable probability measure in the ''space of all problems'' there are no rigorous results on ''average'' complexity. Anyway all the solutions given below were obtained usually in few seconds and in any case in less than a minute with standard PC.
Note that the decomposition of the system may take a lot more time than computing just the ''most general'' solution.

In all cases there actually were several components in the system, so that in the language of differential algebra the initial system was never prime differential ideal.  It is not quite clear how to interpret this geometrically. Of course as components they provide essentially different solutions to PDE systems. From this it does not necessarily follow that the corresponding Riemannian manifolds are essentially different (i.e. not isometric). We did not attempt to study this problem. Since there were so many different components in all we will mostly give below only the most general one. 

Let us now formulate more precisely the PDE systems that we are trying to solve.

\begin{problem} (\textsf{RR} problem) 
Find a metric $g$ such that $P_{ijk}=0$ where 
\begin{equation}
    P_{ijk}= \mathsf{sc}\,\mathsf{Ri}_{ij;k}-\mathsf{sc}_{;k}\mathsf{Ri}_{ij}
    \label{rr-cond}
\end{equation}
 This is a third order quasilinear   system of $\tfrac{1}{2}\,n^2(n+1)$ PDE.
 \label{prob-rr}
\end{problem}

\begin{problem} (\textsf{PRS} problem) 
Find a metric $g$ such that $Q_{ijk}=0$ where 
\begin{equation}
    Q_{ijk}= 2\mathsf{sc}\,\mathsf{Ri}_{ij;k}-2\mathsf{sc}_{;k}\mathsf{Ri}_{ij}-\mathsf{sc}_{;i}\mathsf{Ri}_{k j}-\mathsf{sc}_{;j}\mathsf{Ri}_{ik }
    \label{prs-cond}
\end{equation}
 This is a third order quasilinear  system of $\tfrac{1}{2}\,n^2(n+1)$ PDE.
 \label{prob-prs}
\end{problem}

\begin{problem} (\textsf{CO} problem) 
Find a metric $g$ such that the Cotton tensor $C=0$. 
 This is a third order quasilinear   system of $n^2(n-1)$ PDE.
 \label{prob-cc}
\end{problem}

\begin{problem} (First \textsf{QE} problem) 
Find a metric $g$ and a function $a$ such that the matrix rank of $T=\mathsf{Ri}-a\,g$ is one. According to \eqref{codim} there are $\binom{n}{2}$ algebraically independent fully nonlinear second order equations. 
 \label{prob-qe1}
\end{problem}

\begin{problem} (Second \textsf{QE} problem) 
Find a metric $g$ and a function $\lambda$ such  that $\mathsf{Ri}=(n-2)\nabla\nabla \lambda$. Here we have $\tfrac{1}{2}\,n(n+1)$ quasilinear second order equations in $\tfrac{1}{2}\,n(n+1)+1$ unknowns. Note that in the completed system \eqref{qe2-syst} there are integrability conditions which are expressed in terms of (modified) Bianchi operator. However, there is no need to compute them explicitly since \textsf{rifsimp} takes care of them automatically.
 \label{prob-qe2}
\end{problem}

Note that the numbers of equations given above is the maximum number of algebraically independent PDE in the initial system. This number is achieved, if the metric is assumed to be completely general. However, if we assume that the metric is of specific form we may initially have less equations. 

But the number of equations is in fact not really important in the present context. We recall that also with polynomial ideals the number of generators of the ideal does not matter, and even the number of generators of the Gr\"obner basis does not give any useful information. In the same way it is very convenient when using the algorithm \textsf{rifsimp} that it is not necessary to check beforehand if the equations are actually algebraically independent; \textsf{rifsimp} takes care of that automatically. Hence in practice we simply compute the relevant tensor and then require that all of its components are zero; if there are some redundant equations in the system  then  \textsf{rifsimp} simply discards them. 

In the following we will look for the solutions of these systems. Since our analysis is local we will always consider only the situation in a single coordinate system. All our examples are of the form ''separation of variables''; in other words all unknown functions are of functions of one variable only. In this way our PDE systems reduce to ODE systems, and it is thus easier to find solutions. The actual form of the initial guess of the metrics is not very critical. Experimenting with different choices showed that it is not particularly hard to find nontrivial solutions. Hence the reader can easily modify our examples and find other solutions.

In the actual computations the use of inequations was quite convenient. We may just  look for those solutions where some function or a more complicated expression is nonzero. This is very natural in the problems below if for example we are not interested in cases where some unkonwn functions vanishes, or that the differential of the scalar curvature vanishes, or that $\mathsf{Ri}=0$. This can be speed up significantly the computations because then \textsf{rifsimp} does not need to worry about irrelevant subcases. 

Let us now briefly describe the output of \textsf{rifsimp}. The algorithm tries to express highest ranking derivatives in terms of lower ranking derivatives. Let $f=(f_1,\dots,f_k)$ be the unknown functions with $x=(x^1,\dots,x^n)$ as independent variables. Let $\alpha^j$ be some multiindices. Then the first part of output is as follows:
\begin{equation}
    \partial^{\alpha^j}f_j=F_j(x,f,\dots)\quad,\quad
    1\le j\le m  
    \label{output}
\end{equation}
In the arguments of $F_j$ there are only derivatives of lower ranking than $\partial^{\alpha^j}f_j$. The number $m$ is not known a priori. \textsf{rifsimp} also tries to eliminate the components $f_j$ from equations as far as possible. In linear algebra the Gaussian elimination reduces the problem to upper triangular form. Of course ''differential nonlinear upper triangular form'' does not exist in general, but \textsf{rifsimp} tries to compute a representation which is as close to it as possible. In the examples below it is seen clearly how this works. Note that the representation may depend heavily on the ranking chosen: the number $m$,  multiindices $\alpha^j$ and functions $F_j$ are not intrinsic.

When the problems are nonlinear all the relevant information about the system cannot always be expressed as in \eqref{output}. In these cases there are additional equations, called \emph{constraints} in \textsf{rifsimp}, of the form
\[
  H_\ell(x,f,\dots)=0\quad,\quad
    1\le \ell\le s  
\]
where the highest ranking derivatives of the arguments of $H_\ell$ are present nonlinearly. Below we will see an example  of this case also.

In addition to this there may be certain inequations in the output. When computing $F_j$ and $H_\ell$ sometimes one has to make some decisions if certain expressions are zero or not. This is how the system decomposes: in the generic case one assumes that ''typically'' any expression is nonzero. The algorithm keeps track of these assumptions and gives them in the output. But of course assuming that some expression is zero can produce solutions which are not contained in the ''general'' solution. Potentially there can be a lot of these branch points so that computing the decomposition, called \emph{casesplit} in \textsf{rifsimp}, can take much more time than computing the generic solution.

Note that the output of \textsf{rifsimp} has a lot of structure and contains a lot of information. So even if one is unable to actually explicitly solve the equations given in the output one typically can immediately obtain some important facts which characterize the possible solution set. In many cases considered below the output can even be easily used for numerical computations while it is not at all clear how a numerical solution could be computed using the initial system. 

Now the fact that \textsf{rifsimp} tries to approach the ''upper triangular form''  makes also the explicit solution easier. One can first solve equations with fewer variables, and then substitute these solutions to equations which contain more variables, like back substitution in Gaussian elimination. In solving the equations we often used the command \textsf{dsolve} in {\sc Maple}.

\section{Results}

\subsection{3 dimensional case}

Let us consider the following metric:
\begin{equation}
  g=f_1(x^1)(dx^{1})^{2}+f_2(x^1)h_2(x^2) (dx^{2})^{2}
  +f_3(x^1)h_3(x^2)q(x^3) (dx^{3})^{2}    
  \label{metric-3d}
\end{equation}

\begin{example}
Problem \ref{prob-rr} with \eqref{metric-3d}. 

In this  case our PDE system has a priori 18 independent equations but actually we have only 14 (not necessarily independent) nonzero equations. The system splits into seven subsystems. However, six systems either require that some unknown functions are constants, or the corresponding solutions give only the trivial solution where $\beta$ reduces to zero. Note that it is anyway possible that in those cases there are solutions which are not special cases of the one given below.

The remaining component of the system has three differential equations; first two equations for $f_j$:
\begin{align*}
    f_2''=&\frac{f_2'(f_1'f_2+f_2'f_1)}{2f_1f_2}\\
     f_3''=&\frac{f_2f_3f_1'f_3'+2f_1f_2(f_3')^2-f_1f_3f_2'f_3'}{2f_1f_2f_3}
\end{align*}
Evidently now one can give $f_1$ arbitrarily and then solve the remaining functions. However, one can actually eliminate one of the functions by solving $f_1$ and $f_3$ in terms of $f_2$ which gives the following family of solutions:
\[
    f_1=\frac{c_2(f_2')^2}{f_2}\quad\mathrm{and}\quad
    f_3=c_1 f_2^m\ .
\]
Note that $m$ need not be an integer. Then we have the third differential equation which contain $h_j$ and $f_j$. However, when we substitute the above formulas the functions $f_j$ disappear and we are left with
\[
  h_3''=\frac{(2m-1)c_2h_2(h_3')^2+mc_2h_3h_2'h_3'-m^2h_2^2h_3^2}{2mc_2h_2h_3}
\]
Solving this for $h_2$ yields
\[
   h_2=\frac{c_2h_3^{1/m}(h_3')^2}{h_3^2(c_2c_3-m^2h_3^{1/m})}=
   \frac{c_2m^2(h')^2}{h(c_2c_3-m^2h)}
\]
where we have introduced a new function $h_3=h^m$.Then writing $f$ instead of $f_2$ we can write our final metric as
\[
  g=\frac{c_2(f')^2}{f} (dx^{1})^{2}+\frac{m^2c_2f (h')^2}{h(c_2c_3-m^2h)} (dx^{2})^{2}+c_1f^mh^mq (dx^{3})^{2}
\]
Clearly one can choose constants and functions such that $g$ is positive definite. For scalar curvature we get $\mathsf{sc}=(1-m)c_3/(2mfh)$ and thus $\beta=-\nabla\ln(fh)$. Note that $m\ne 1$ because otherwise also $\mathsf{Ri}=0$.

\label{ex-rirec-1}
\end{example}

\begin{example}
Problem \ref{prob-prs} with \eqref{metric-3d}. 

 Now we have our PDE system which is very similar to $\mathsf{RR}$ case, but of course the solutions are different, by Theorem \ref{rr-prs}. 
Again we have 14 PDE, and computing with \textsf{rifsimp} get three cases where $\alpha\ne 0$. One case is the following:
\begin{align*}
        f_2'=&\frac{f_2f_3'}{f_3}\\
     f_3''=&\frac{f_1(f_3')^2+f_3f_1'f_3'}{2f_1f_3}\\
     h_3'''=&\frac{h_2h_3^2(h_2''h_3'+3h_2'h_3'')+4h_2^2h_3h_3'h_3''-h_2^2(h_3')^2-2h_2h_3h_2'(h_3')^2-2h_3^2(h_2')^2h_3'}{2h_2^2h_3^2}
\end{align*}
It turns out that in the first two equations we can  solve $f_1$ and $f_2$ in terms of $f_3$, and in the last one we get $h_2$ in terms of $h_3$:
\begin{align*}
    f_1=&\frac{c_1(f_3')^2}{f_3}\\
    f_2=&c_2f_3\\
    h_2=&\frac{(h_3')^2}{h_3(c_3h_3+c_4)}
\end{align*}
Then writing $f_3=f$ and $h_3=h$ we get
\begin{align*}
  g=&\frac{c_1(f')^2}{f} (dx^{1})^{2}+ \frac{c_2f(h')^2}{h(c_3h+c_4)} (dx^{2})^{2}+fhq (dx^{3})^{2}
  \quad\mathrm{and}\\
  \alpha=&-\frac{f'}{2f}\,dx^1\quad,\quad 
  \mathsf{Ri}=-\frac{c_1c_3+c_2}{4c_1c_2h}\Big(\frac{c_2(h')^2}{c_3h+c_4} (dx^{2})^{2}+
  qh^2 (dx^{3})^{2}\Big)\ .    
\end{align*}
Note that the solution obtained satisfies also the \textsf{QE} condition with $a\ne0$, and hence also the \textsf{CO} condition by Lemma \ref{prs-cc-qe}.

\label{ex-prs-3d}
\end{example}

\begin{example}
Problem \ref{prob-cc} with metric \eqref{metric-3d}.

Now we have 8 equations in the system. By computing with \textsf{rifsimp} get three cases and in the most general case we have
\begin{align*}
    f_2'''=&F(f_1,f_2,f_3)\\
   h_3''=&H(f_1,f_2,f_3,h_2,h_3)
\end{align*}
where $F$ and $H$ are very complicated expressions, involving also the derivatives of its arguments, so that we do not write them down explicitly. The function $H$ at the outset depends on $f_j$ but if $f_j$ satisfy the first  equation then actually $H$ does not depend on $x^1$. The dependence of $H$ on $f_j$ is only through initial conditions of the first  equation, and consequently by standard theorems we have the local solution, and the above system can even be used for numerical computations. 

However, it turns out that one can describe the solution in a more explicit way. One can actually solve the first equation for $f_3$ using quadratures which yields 
\[
   f_3(x^1)=\exp\big(\hat F(f_1,f_2,f_1',f_2',f_2'')\big)
\]
where in the expression $\hat F$ there are also some integrals whose integrands depend on the variables indicated. Now substituting this expression to the second equation gives
\[
   2h_2h_3h_3''-h_3h_2'h_3'-2h_2(h_3')^2
   +c_0h_2^2h_3^2=0
\]
where $c_0$ is constant. Solving this gives
\[
   h_2=\frac{(h_3')^2}{(c_1-c_0\,\ln(h_3))h_3^2}
\]
Hence we can choose $f_1$, $f_2$, $h_3$ and $q$ freely and it is clear that this choice, and the choice of constants $c_0$ and $c_1$, can be done in such a way that the metric is positive definite.
\label{ex-cc-3d}
 \end{example}

\begin{example}
Problem \ref{prob-qe1} with metric \eqref{metric-3d}.

Constructing the appropriate PDE system we obtain five nonzero equations. Since the function $a$ appears algebraically and in some equations even linearly we can solve it and substitute back to the equations. Note that one gets different families of solutions, depending on the choice of $a$. However, we will analyze only one particular family of solutions. 

After choosing $a$ we are thus left with one single PDE;  \textsf{rifsimp} gives then us the following system:
\begin{align*}
    f_2'''=&F_1(f_1,f_2,f_3)\\
    f_3'''=&F_2(f_1,f_2,f_3)\\
    h_3''=&H(f_1,f_2,f_3,h_2,h_3)
\end{align*}
The expressions for $F_j$ and $H$ are again so big that we do not give them explicitly. Also the dependence of $H$ on $f_j$ is only through initial conditions as in the previous example, so that choosing $f_1$ and $h_2$ arbitrarily yields an ODE system in the standard form.

However, we can also solve the system explicitly; denoting $f_2=f$ the first two equations give
\begin{align*}
    f_1=&\frac{c_1c_3(f')^2}{f_3 f}\\
    f_3=&c_3m^{-m}f^{1-m}\big(c_2f-1\big)^m
\end{align*}
Substituting this into third equations yields
\[
  h_3''=\frac{\big((3m-2)h_2h_3'+2(m-1)h_3h_2'\big)h_3'}{4(m-1)h_2h_3}
\]
Denoting $h_3=h$ and solving for $h_2$ yields
\[
   h_2=c_4h^{(2-3m)/(2m-2)}(h')^2
\]
After this it is straightforward to compute $a$, $b$ and $\omega$ which gives
\begin{align*}
    a=&\frac{m}{8(1-m)c_4f}\, h^{(2-m)/(2m-2)}-\frac{c_2^2f^2+(m-2)c_2f+(m-1)^2}{2c_1m^m f^m}\,\big(c_2f-1\big)^{m-2}\\
    b=&\frac{m}{8(m-1)c_4f}\, h^{(2-m)/(2m-2)}+\frac{m(m-1)}{2c_1m^m f^m}\,\big(c_2f-1\big)^{m-2}\\
    \omega=&fh'(c_2f-1)\partial_{x^1}+(2-2m)hf'\partial_{x^2} 
\end{align*}
\label{ex-qe1-3d}
\end{example}

\begin{example}
Problem \ref{prob-qe2} with metric \eqref{metric-3d}.

Here we see that $\lambda_2$, $h_3$ and $f_2$ must be constants; for simplicity let us choose $\lambda_2=h_3=f_2=1$. Then for other functions we obtain
\begin{align*}
    f_1=&c_1(\lambda_1')^2\\
    f_3=&c_2\lambda_1^2\\
    q_3=&\frac{c_1(\lambda_3')^2}{c_1c_3-c_2\lambda_3^2}
\end{align*}
Note that there is no condition on $h_2$, and also $\lambda_1$ and $\lambda_3$ can be freely chosen. This solution implies that  $\mathsf{Ri}=\nabla\nabla\lambda=0$, but of course $\widehat{\mathsf{Ri}}$ gives a nontrivial example of QE manifold.
\label{ex-qe2-3d}
 \end{example}

\subsection{First 4 dimensional case}

 Let us then consider a simple four dimensional case:
 \begin{equation}
  g=(dx^{1})^{2}+f(x^{1})q(x^4)\big((dx^{2})^{2}+(dx^{3})^{2}+(dx^{4})^{2}\big)    
  \label{metric-4d1}
 \end{equation}

\begin{example}
Problem \ref{prob-rr} with metric \eqref{metric-4d1}.

It turns out that $\mathsf{RR}$ system has only solutions with $\mathsf{Ri}=0$ so there are no examples of this form. Here the use of inequations was very convenient. When one added to the PDE system the condition $\beta\ne0$, \textsf{rifsimp} concluded that the system is inconsistent.
\label{ex-rr-4d1}
 \end{example}

\begin{example}
Problem \ref{prob-prs} with metric \eqref{metric-4d1}.

This illustrates quite well how \textsf{rifsimp} handles the system and how the solutions can split into several (in this case two) families, so that we describe this in more detail.  
The equations of the $\mathsf{PRS}$  system give
\begin{equation}
\begin{aligned}
 &   f''=\frac{(f')^2}{2f}\\
&    q'''=\frac{4qq'q''-3(q')^3}{q^2}\\
  &  4fq^2(q'')^2-10fq(q')^2q''+6f(q')^4+2q^4(f')^2q''-3q^3(f')^2(q')^2=0
\end{aligned}
\label{yht-4d-prs}  
\end{equation}
Note that the last equation depends on both variables $x^1$ and $x^4$ so it seems that there might not be solutions. The third equation is different from the other two in another way. In first two equations highest order derivative is explicitly given in terms of lower order derivatives. In the final equation the highest derivative, namely $q''$ appears non linearly and consequently cannot be given explicitly. Recall that \textsf{rifsimp} calls equations of this type constraints. 

Anyway when solving the first equation we obtain $f=(c_1x^1+c_0)^2$ and substituting this to the final equation gives
\[
   2f\big( 2qq''-3(q')^2\big)\big(2c_1^2q^3+qq''-(q')^2\big)=0
\]
Now taking the first factor we have
\begin{equation}
\begin{aligned}
 &   2qq''-3(q')^2=0\quad\Rightarrow\quad
    q=\frac{1}{(c_2x^4+c_3)^2}\\
  &  \alpha=-\frac{c_1}{c_1x^1+c_0}\,dx^1\\
  & \mathsf{Ri}=-\frac{2(c_1^2+c_2^2)}{(c_2x^4+c_3)^2}\Big((dx^{2})^{2}+(dx^{3})^{2}+(dx^{4})^{2}\Big) 
\end{aligned}
\label{ri-prs-1}
\end{equation}

while the second factor yields
\begin{equation}
\begin{aligned}
 &  2c_1^2q^3+qq''-(q')^2=0\quad\Rightarrow\quad
    q=\frac{c_2^2}{c_1^2(\cosh(c_2x^4+c_3))^2}\\
  &  \alpha=-\frac{c_1}{c_1x^1+c_0}\,dx^1+c_2\tanh(c_2x^4+c_3)dx^4\\
  & \mathsf{Ri}=-c_2^2\big((dx^{2})^{2}+(dx^{3})^{2}\big)
\end{aligned}   
\label{ri-prs-2}
\end{equation}

Note that both families of solutions also satisfy the second equation of the system \eqref{yht-4d-prs}. 

 The solution \eqref{ri-prs-1} satisfies  also the $\mathsf{QE}$ condition and hence by Lemma \ref{prs-cc-qe} also the condition \textsf{CO}.  On the other hand the Ricci tensor corresponding to the solution \eqref{ri-prs-2} has two double eigenvalues and hence cannot be $\mathsf{QE}$.

\label{ex-prs-4d1}
 \end{example}

\begin{example}
Problem \ref{prob-cc} with metric \eqref{metric-4d1}.

The \textsf{CO} case is very easy: $f$ is arbitrary and 
\[
    q''=\frac{3(q')^2}{2q}\quad \Rightarrow\quad
     q=\frac{1}{(c_{1}x^{4}+c_0)^{2}}
\] 
Here we get the same $q$ as in \eqref{ri-prs-1}. Hence here \textsf{PRS} case is a subcase of \textsf{CO} case.
\label{ex-cc-4d1}
 \end{example}

\begin{example}
Problem \ref{prob-qe1} with metric \eqref{metric-4d1}.

After computing the minors we solve $a$ from one of the equations which yields
\begin{align*}
  a=-\frac{2f''q^{3}f+(f')^{2}q^{3}+2fqq''-(q')^{2}f}{4f^{2}q^{3}}
\end{align*}
Substituting this expression to the system and then computing with \textsf{rifsimp} gives the following system for $f$ and $q$:
\begin{align*}
    f'''=-&\frac{-2ff''f'+(f')^{3}}{f^{2}}\\
    q''=&\frac{4f''q^{3}f-4(f')^{2}q^{3}+(q')^{2}f}{2qf}
\end{align*}
The first equation can be solved and then the second one is in the standard form:
\begin{align*}
    f=&\,4c_3\cosh(c_1x^1+c_2)^2\\
    q''=&\frac{32c_{1}^{2}c_{3}q^{3}+(q')^{2}}{2q}
\end{align*}
Interestingly if $4c_1^2c_3=1$ then $q$ is a Weierstrass elliptic function. 

\label{ex-qe1-4d1}
 \end{example}

\begin{example}
Problem \ref{prob-qe2} with metric \eqref{metric-4d1}.

Here it is natural to suppose that $\lambda=\lambda_1(x^1)\lambda_4(x^4)$. The most general solution is now incompatible with the positive definiteness of the metric. However, we still have a nontrivial family solutions. First we set $\lambda_4=1$ and then compute $q=1/(c_1x^4+c_0)^2$. So here again $q$ must be the same as in \textsf{CO} case and in one of the \textsf{PRS} cases. Hence all metrics obtained in this way must satisfy also the \textsf{CO} condition.

Substituting the computed value of $q$ to the system  leaves us with the following equations:
\begin{align*}
  &  2ff''+ff'\lambda_1'+8c_1^2f+(f')^2=0\\
  &  4f^2\lambda_1''-6ff'\lambda_1'-12c_1^2f-3(f')^2=0
\end{align*}
There is no explicit formula for solution but again by standard theorems the local solution exists. Note that the metrics obtained in this way  satisfy the \textsf{QE} condition. Hence  we have metrics $g$ and $\hat g$ which are both \textsf{QE} and which are conformally equivalent. 
\label{ex-qe2-4d1}
 \end{example}

\subsection{Second 4 dimensional case}

Let us now consider
\begin{equation}
  g=f_1(x^1)(dx^{1})^{2}+f_2(x^{1})(dx^{2})^{2}+f_3(x^1)(dx^{3})^{2}+f_4(x^1)(dx^{4})^{2}
\label{metric-4d2}
\end{equation}

\begin{example}
Problem \ref{prob-rr} with metric \eqref{metric-4d2}.

It turns out that $\mathsf{RR}$ conditions force two of the functions $f_2$, $f_3$ and $f_4$ to be constants. Choosing for example $f_3=f_4=1$ we have $\beta=\nabla\ln(\mathsf{sc})$ where
\[
 \mathsf{sc}=\frac{-2f_1f_2f_2''+f_1(f_2')^2+f_2f_1'f_2'}{2f_1^2f_2^2}
\]
In essence the problem reduces to the 2 dimensional case and of course in 2 dimensions any metric is \textsf{RR}.
 \label{ex-rr-4d2}
 \end{example}

\begin{example}
Problem \ref{prob-prs} with metric \eqref{metric-4d2}.

 The system decomposes to 8 components and  the most general one gives the following system:
\begin{align*}
f_2''=&\frac{\big(f_2f_3f_4f_1'+ f_1f_3f_4f_2'-f_1f_2f_4f_3'-f_1f_2f_3f_4'\big)f_2'}{2f_1f_2f_3f_4}\\
f_3''=&\frac{\big(f_2f_3f_4f_1'- f_1f_3f_4f_2'+f_1f_2f_4f_3'-f_1f_2f_3f_4'\big)f_3'}{2f_1f_2f_3f_4}\\
f_4''=&\frac{f_2f_3f_4f_1'f_4'+2f_1(f_4)^2f_2'f_3'+ f_1f_3f_4f_2'f_4'+f_1f_2f_4f_3'f_4'+f_1f_2f_3(f_4')^2}{2f_1f_2f_3f_4}
\end{align*}
It turns out that one can solve this explicitly. Let us set $f_3=f$; then the other functions are given by
\begin{align*}
    f_1&=c_1c_2c_4m^2\exp(c_3 f^{m+1})f^{(m^2-m-1)/(m+1)}(f')^2\\
    f_2&=c_2f^m\\
    f_4&=c_4\exp(c_3 f^{m+1}) f^{-m/(m+1)}
\end{align*}
This yields
\begin{align*}
   \alpha&=\frac{\big(m-c_3(m+1)^2f^{m+1}\big)f'}{2(m+1)f}\,dx^1\\
  \mathsf{Ri}&=-\frac{c_3(m+1)^2}{2c_1c_2m^2}\, (dx^4)^2
\end{align*}
Note that $\mathsf{Ri}$ also satisfies  the \textsf{QE} condition  with $a=0$, see Theorem \ref{q-prs}. 
 
 \label{ex-prs-4d2}
 \end{example}

\begin{example}
Problem \ref{prob-cc} with metric \eqref{metric-4d2}.

In the \textsf{CO} case we have initially 6  equations, but  \textsf{rifsimp}  gives only the following 2 equations:
\begin{align*}
   f_2'''&=F_{2}(f_{1},f_{2},f_{3},f_{4})
 \\
   f_3'''&=F_{3}(f_{1},f_{2},f_{3},f_{4})
\end{align*}   
Here the expressions of $F_2$ and $F_3$ are so big that we do not give them explicitly. Anyway choosing $f_1$ and $f_4$ arbitrarily we have a standard ODE system for $f_2$ and $f_3$.

 \label{ex-cc-4d2}
 \end{example}

\begin{example}
Problem \ref{prob-qe1} with metric \eqref{metric-4d2}.

There are three families of solutions and as usual we give the most general. First \textsf{rifsimp} gives:
\begin{align*}
f_2''=&\frac{2f_1f_2^2f_3f_4f_4''-f_1f_2^2f_3(f_4')^2+f_1f_2^2f_4f_3'f_4'-f_1f_2f_4^2f_2'f_3'+f_1f_3f_4^2(f_2')^2-f_2^2f_3f_4f_1'f_4'+f_2f_3f_4^2f_1'f_2'}{2f_1f_2f_3f_4^2}\\
f_3''=&\frac{2f_1f_2f_3^2f_4f_4''-f_1f_2f_3^2(f_4')^2+
f_1f_2f_4^2(f_3')^2-f_1f_3f_4^2f_2'f_3'+
f_1f_3^2f_4f_2'f_4'-f_2f_3^2f_4f_1'f_4'+
f_2f_3f_4^2f_1'f_3'}{2f_1f_2f_3f_4^2}
\end{align*}
If $f_1$ and $f_4$ are given arbitrarily this is in standard form so the local solution exists. Then we see that we can choose $\omega=dx^1$, and $a$ and $b$ are given by
\begin{align*}
    a=& \frac{f_1f_2f_3(f_4')^2+f_2f_3f_4f_1'f_4'+f_1f_2f_4f_3'f_4'-f_1f_3f_4f_2'f_4'-2f_1f_2f_3f_4f_4''}{4f_1^2f_2f_3f_4^2}    \\
   b=&\frac{f_1f_2f_3(f_4')^2+f_2f_3f_4f_1'f_4'+f_1f_4^2f_2'f_3'-2f_1f_2f_3f_4f_4''}{2f_1^2f_2f_3f_4^2}   
\end{align*}
 \label{ex-qe1-4d2}
\end{example}

\begin{example}
Problem \ref{prob-qe2} with metric \eqref{metric-4d2}.

Here we choose that $\lambda$ also is only function of $x^1$. This gives the system
\begin{align*}
    f_2''=&\frac{f_1f_3f_4(f_2')^2+f_2f_3f_4f_1'f_2'
    -f_1f_2f_4f_2'f_3'-f_1f_2f_3f_2'f_4'-4f_1f_2f_3f_4f_2'\lambda'}{2f_1f_2f_3f_4}\\
    f_3''=&\frac{f_1f_2f_4(f_3')^2+f_2f_3f_4f_1'f_3'
    -f_1f_3f_4f_2'f_3'-f_1f_2f_3f_3'f_4'-4f_1f_2f_3f_4f_3'\lambda'}{2f_1f_2f_3f_4}\\
    f_4''=&\frac{f_1f_2f_3(f_4')^2+f_2f_3f_4f_1'f_4'
    -f_1f_3f_4f_2'f_4'-f_1f_2f_4f_3'f_4'
    -4f_1f_2f_3f_4f_4'\lambda'}{2f_1f_2f_3f_4}\\
    \lambda''=&\frac{f_1(f_2f_3'f_4'+f_2'f_3f_4'+f_2'f_3'f_4)
    +2\lambda'\big(f_1'f_2f_3f_4+f_1f_2'f_3f_4
    +f_1f_2f_3'f_4+f_1f_2f_3f_4'\big)}{4f_1f_2f_3f_4}
\end{align*}
This looks complicated but actually we can solve it in terms of $f_2$. So denoting $f=f_2$ and $\nu=\sqrt{1+n^2+m^2}$ we obtain
\begin{align*}
    f_1=&c_1c_2c_3f^{-\nu-2}(f')^2\\
    f_3=&c_2 f^m\\
    f_4=&c_3 f^n\\
    \lambda=&-\frac{1+n+m+\nu}{4}\,\ln(f)
\end{align*}

 \label{ex-qe2-4d2}
 \end{example}

\subsection{Third 4 dimensional case}
Let us consider the following four dimensional case:
\begin{equation}
  g=q(x^4)(dx^{1})^{2}+u(x^3) (dx^{2})^{2}
  +h(x^2) (dx^{3})^{2} +f(x^1) (dx^{4})^{2}    
  \label{metric-4d3}
\end{equation}

\begin{example}
Problem \ref{prob-rr} with metric \eqref{metric-4d3}.

In the $\mathsf{RR}$ case we have 16 equations. Again there are sub cases but the most general solution is the following. For $h$ and $u$ we have the equations
\begin{align*}
    h'''&=\frac{h'(2hh''-(h')^2)}{2h^2}\\
    u''&=\frac{u(h')^2+h(u')^2-2huh''}{2hu}
\end{align*}
Again the second equation depends on $h$ only through initial conditions. Hence by standard theorems local solutions exist for these equations. 
Then $\beta$ is of the form $\beta=\beta_1 dx^1+\beta_4 dx^4$ where $\beta_j$ depend only on $f$ and $q$.

\label{ex-rr-4d3}
\end{example}

\begin{example}
Problem \ref{prob-prs} with metric \eqref{metric-4d3}.

The system is incompatible with the requirement $\alpha\ne 0$, so there are no $\mathsf{PRS}$ metrics of this form.
\label{ex-prs-4d3}
\end{example}

\begin{example}
Problem \ref{prob-cc} with metric \eqref{metric-4d3}.

In this case the equations yield only solutions where $\nabla \mathsf{Ri}=0$, so there are no interesting examples of this form.
\label{ex-cc-4d3}
\end{example}

\begin{example}
Problem \ref{prob-qe1} with metric \eqref{metric-4d3}.

Computing the minors and solving for $a$ we obtain a system which requires that two of the functions $f$, $h$, $u$ and $q$ must be constants. This leads to a solution with nontrivial Ricci tensor, but this Ricci tensor is necessarily Einstein so there are no nontrivial quasi Einstein metrics of this form.

\label{ex-qe1-4d3}
\end{example}

\begin{example}
Problem \ref{prob-qe2} with metric \eqref{metric-4d3}.

Here the most general solution to the differential equations is incompatible with positive definiteness of the metric. For the next general system one has $\lambda_2=\lambda_3=f=1$ and for other functions we obtain
\begin{align*}
  &  2h^2 h'''-h'\big(2hh''-(h')^2\big)=0\\
  & 2qq''-(q')^2=0\\
 &2hu u''+2huh''-h(u')^2-u(h')^2=0\\
 &  4q\lambda_1''+(q')^2\lambda_1=0\\
    &   2q\lambda_4'-\lambda_4 q'=0
\end{align*}
This has the following family of solutions
\begin{align*}
    q=&(c_1x^4+c_0)^2\\
     u=&(c_2x^3+c_3)^2\\
      h=&(c_4x^2+c_5)^2\\
         \lambda_4=&c_6(c_1x^4+c_0)\\
           \lambda_1=&c_7\cos(c_1x^1)+c_8\sin(c_1x^1)
\end{align*}
Here $\mathsf{Ri}=\nabla\nabla\lambda=0$, but of course $\widehat{\mathsf{Ri}}$ is nontrivial. 
\label{ex-qe2-4d3}
\end{example}

\section{Conclusion}
We have seen above that with appropriate methods one can readily get large classes of nontrivial examples of various classes of Riemannian manifolds. The particular  initial form of the Riemannian metric was not critical; testing different choices revealed that typically one could always find solutions. The formal theory of PDE can be also useful for exploring other classes of Riemannian manifolds, and in general related questions in differential geometry. Spivak writes in \cite[p. 189]{spivak} that many problems in differential geometry are in fact problems of overdetermined PDE whose solution require that one knows all the integrability conditions. Since manipulating PDE systems by hand is typically extremely tedious, one has invented "incredibly concise and elegant ways to state the integrability conditions [...] without ever even mentioning partial derivatives.'' However, the tools which were used above  to compute the involutive form of various systems can perhaps be helpful in many other contexts in differential geometry,  at least in producing relevant (counter)examples in various situations.

\printbibliography

\end{document}